\newcommand{\mychoice}[3]{#1
% #1 means finishing mode #2 means proofing mode #3 means extended finishing mode
}
\newcommand{\plabel}[1]{ \label{#1}}
\newcommand{\gbibitem}[1]{ \bibitem{#1}}
\newcommand{\snewpage}{}
\newcommand{\plabel}[1]{ \label{#1}\rlap{\smash{${}^{^{[#1]}}$}}}
\newcommand{\gbibitem}[1]{ \bibitem{#1}\rlap{\smash{${}^{^{[#1]}}$}}}
\newcommand{\snewpage}{\newpage}
\newenvironment{commentx}{\color{magenta} }{\color{black} }
\newenvironment{commenty}{\color{blue} }{\color{black} }
\newcommand{\plabel}[1]{ \label{#1}}
\newcommand{\gbibitem}[1]{ \bibitem{#1}}
\newcommand{\snewpage}{}
\DeclareMathOperator{\nonass}{n-a}
\DeclareMathOperator{\Lie}{Lie}
\DeclareMathOperator{\cmr}{cmr}
\DeclareMathOperator{\ad}{ad}
\DeclareMathOperator{\BCH}{BCH}
\DeclareMathOperator{\Tr}{Tr}
\DeclareMathOperator{\tr}{tr}
\DeclareMathOperator{\cw}{cw}
\DeclareMathOperator{\rem}{rem}
\DeclareMathOperator{\degg}{\mathfrak{deg}}
\DeclareMathOperator{\Gal}{Gal}
\DeclareMathOperator{\Id}{Id}
\DeclareMathOperator{\id}{id}
\newcommand{\Starting}{\mathrm E^{\mathrm L}}
\newcommand{\Ending}{\mathrm E^{\mathrm R}}
\newcommand{\bo}{\boldsymbol}
\newcommand{\botimes}{{\textstyle\bigotimes}}
\newcommand{\leaveout}[1]{}
\newcommand{\ass}{\mathrm{assoc}}
\newcommand{\qedexer}{\renewcommand{\qedsymbol}{$\diamondsuit$}\qed\renewcommand{\qedsymbol}{$\Box$}}
\newcommand{\qedremark}{  \renewcommand{\qedsymbol}{$\triangle$} \qed \renewcommand{\qedsymbol}{$\Box$}}
\newcommand{\qedno}{\renewcommand{\qedsymbol}{}}
\newcommand{\eqed}{
\pushQED{\qed}
\qedhere
\popQED
}
\theoremstyle{definition}
\newtheorem{point}{}[section]
\newtheorem{remark}[point]{Remark}
\newtheorem{example}[point]{Example}
\theoremstyle{plain}
\newtheorem{prop}[point]{Proposition}
\newtheorem{lemma}[point]{Lemma}
\newtheorem{theorem}[point]{Theorem}
\newtheorem{cor}[point]{Corollary}
\newcommand{\marginextend}[1]{ \addtolength{\oddsidemargin}{-#1}  \addtolength{\evensidemargin}{-#1}\addtolength{\textwidth}{#1}\addtolength{\textwidth}{#1}}
\newcommand{\updownextend}[1]{ \addtolength{\topmargin}{-#1}  \addtolength{\textheight}{#1}
\addtolength{\textheight}{#1}}
\begin{document}
\title{The Dynkin--Specht--Wever lemma and some associated constructions}
\date{\today}
\author{Gyula Lakos}
\email{lakos@renyi.hu}
\address{Alfréd Rényi Institute of Mathematics}
\keywords{Dynkin--Specht--Wever lemma, free Lie algebras, Kashiwara--Vergne problem}
\subjclass[2010]{Primary: 17B35. Secondary: 17B01, 16S30.}
\begin{abstract}
In the first part of the paper, some extensions of the classical Dynkin--Specht--Wever lemma are developed.
In the second part, we extend Burgunder's splitting construction, and relate back to the
Kashiwara--Vergne problem.
\end{abstract}
\maketitle

\section*{Introduction}\plabel{sec:intro}
\textbf{The setup of the Dynkin--Specht--Wever lemma.}
Free Lie $K$-algebras $\mathrm F_K^{\Lie}[X_\lambda :\lambda\in\Lambda]$ can be considered over any coefficient ring $K$.
The free associate algebra $\mathrm F_K[X_\lambda :\lambda\in\Lambda]$ is also Lie $K$-algebra with respect to
 associative commutators, thus there is natural commutator evaluation map
 $\iota:\mathrm F_K^{\Lie}[X_\lambda :\lambda\in\Lambda]\rightarrow\mathrm F_K[X_\lambda :\lambda\in\Lambda]$.
In a more abstract viewpoint, $\mathrm F_K[X_\lambda :\lambda\in\Lambda]$ naturally isomorphic to universal
 enveloping algebra $\mathcal U\mathrm F_K^{\Lie}[X_\lambda :\lambda\in\Lambda]$, and then $\iota$ is the natural
 evaluation map into the universal enveloping algebra.
It is the Magnus--Witt theorem (see Magnus \cite{MM}, Witt \cite{W}) that $\iota$ is injective.
This can be thought as the weak (filtration level $1$) version of the Poincaré--Birkhoff--Witt theorem (cf. Birkhoff \cite{B}, Witt \cite{W})
 applied to the free Lie $K$-algebra $\mathrm F_K^{\Lie}[X_\lambda :\lambda\in\Lambda]$
 (except the applicability of the PBW theorem is not completely trivial unless $K$ is a field);
 otherwise it is also a simple consequence of the eliminiation
 technique of Magnus (see Magnus \cite{MM} or Magnus, Karrass, Solitar \cite{MKS}) or the one of
 Shirshov and Lazard (see \v{S}ir\v{s}ov \cite{S0}, Lazard \cite{L2},
 Bourbaki \cite{BX}).
The Dynkin--Specht--Wever lemma of Dynkin \cite{Dy}, Specht \cite{Sp}, Wever \cite{We},
 helps to reconstruct $P^{\Lie}\in \mathrm F_K^{\Lie}[X_\lambda :\lambda\in\Lambda]$
 (imagined as a linear combination of Lie monomials) from $P^{\ass}=\iota(P^{\Lie})\in \mathrm F_K [X_\lambda :\lambda\in\Lambda]$
 (imagined as a linear combination of associative monomials) using simple iterated commutators.
This tool works most properly if $\mathbb Q\subset K$ (and, in particular, it yields the Magnus--Witt theorem).
Otherwise, it might yield only partial information, and more sophisticated techniques involving
 free Lie algebra bases might be needed (cf. Reutenauer \cite{R}).
If we assume  $\mathbb Q\subset K$,
 then the Dynkin--Specht--Wever lemma can also be thought as giving a projection
 in  $\mathrm F_K^{\Lie}[X_\lambda :\lambda\in\Lambda]$ to
 $\mathrm F_K^{\cmr}[X_\lambda :\lambda\in\Lambda]\equiv\iota(\mathrm F_K^{\Lie}[X_\lambda :\lambda\in\Lambda])$,
 i. e. a ``Lie idempotent'', and concretely
 the ``Dynkin idempotent'' (but there are `left' and `right' versions).
Considering the kernel of the projection, it is not quite as nice as the first canonical projection or
 the ``Eulerian idempotent'' (cf. Solomon \cite{SS}, Reutenauer \cite{R}).
However, what makes the Dynkin method successful is its simplicity.
Therefore it might be interesting to discuss variants of the lemma or some associated constructions
 which retain that (or close) practical level of explicitness.

\textbf{In the first part of this paper,} we investigate some extensions of the classical Dynkin--Specht--Wever lemma.
We start with the weighted DSW presentation.
Multigrade-wise,  this can be thought as the (infinite) linear combination of principal DSW presentations.
Then a centrally bracketed version of the Dynkin--Specht--Wever lemma is exhibited.
Next a (naturally weak) generalization to the universal enveloping algebra is considered.
In this first part, the Lie algebraic and associative commutator algebraic levels are kept
 carefully separated, and in the beginning $\mathbb Q\subset K$ is not assumed.

\textbf{In the second part of this paper,} the character of the discussion changes.
We   assume that $\mathbb Q\subset K$ and the natural identifications
 between free Lie algebras and the associative commutator algebras will be taken granted.
Here we discuss the Dynkin--Burgunder splitting maps.
We give a description of their behaviour on Lie monomials and we
 comment on their relationship to the Kashiwara--Vergne conjecture of
 Kashiwara, Vergne \cite{KV}, which is by now the Alexeev--Meinrenken--Torossian theorem,
 see Alexeev, Meinrenken \cite{AM2},  Alexeev, Torossian \cite{AT};
 which was the original movation for Burgunder's construction.

\textbf{The setup of the Dynkin--Burgunder splitting maps.}
Let us consider the free Lie algebra $\mathrm F^{\Lie}[X_\lambda : \lambda\in\Lambda]$
 (over a coefficient ring $K\supset \mathbb Q$, therefore omitted).
We identify $\mathrm F^{\Lie}[X_\lambda : \lambda\in\Lambda]$ with the commutator Lie subalgebra of $\mathrm F[X_\lambda : \lambda\in\Lambda]$
 generated by the $X_\lambda$.
The algebras $\mathrm F^{\Lie}[X_\lambda : \lambda\in\Lambda]$ and $\mathrm F[X_\lambda : \lambda\in\Lambda]$
 are naturally and compatibly graded.
Let $\mathrm F^{\Lie}[X_\lambda : \lambda\in\Lambda]_{(n)}$ denote the $n$-homogeneous part of
 $\mathrm F^{\Lie}[X_\lambda : \lambda\in\Lambda]$.
If $n\geq2$, then $\mathrm F^{\Lie}[X_\lambda : \lambda\in\Lambda]_{(n)}$ is a sum a commutators,
 which can expanded to a sum  of commutators $X_\lambda$ and elements from  $\mathrm F^{\Lie}[X_\lambda : \lambda\in\Lambda]_{(n-1)}$.
In certain situations it may be beneficial to consider a natural map
\begin{equation}
\delta^{\mathrm L}_n: \mathrm F^{\Lie}[X_\lambda : \lambda\in\Lambda]_{(n)}\rightarrow
\mathrm F^{\Lie}[X_\lambda : \lambda\in\Lambda]_{(1)}\otimes \mathrm F^{\Lie}[X_\lambda : \lambda\in\Lambda]_{(n-1)}
\plabel{eq:splita}
\end{equation}
 such that
\begin{equation}
[\cdot,\cdot]_{(1,n-1)}\circ \delta^{\mathrm L}_n=\id_{\mathrm F^{\Lie}[X_\lambda : \lambda\in\Lambda]_{(n)}};
\plabel{eq:basid}
\end{equation}
 where $[\cdot,\cdot]_{(1,n-1)}$ is just the restriction of the commutator map to the corresponding subspace.
Using the Dynkin--Specht--Wever lemma, such a map can be constructed easily.
This was done by Burgunder \cite{Bur}.

As we have mentioned, the original motivation for that was the Kashiwara--Vergne conjecture.
Let us recall this.
As it is well-known, $\BCH(X,Y)\equiv\log((\exp X)(\exp Y))$ is a formal Lie series in $X,Y$
which can also be generated relatively explicitly, and which is analytic for $X,Y\sim 0$ in appropriate sense.
This is the Baker--Campbell--Hausdorff theorem, see Bonfiglioli, Fulci \cite{BF}, Achilles, Bonfiglioli \cite{AB}.
The Kashiwara--Vergne conjecture, in a universal version, postulates the existence of analytic Lie series $F(X,Y)$ and $G(X,Y)$
such that

(a) $X+Y-\BCH(Y,X)=[X,\alpha(-\ad X)F(X,Y)]+[Y,\alpha(\ad Y) G(X,Y)]$,

(b) $\Tr\left( \Ending_X  F(X,Y)+\Ending_Y  G(X,Y) \right)=
\frac12\Tr\left( \beta(X)+\beta(Y)-\beta(\BCH(X,Y))-1 \right)$,\\
 where
 $\alpha(u)=\frac{\mathrm e^{u}-1}{u}$ and $\beta(u)=\frac{u}{\mathrm e^{u}-1}$ are formal power series;
 $\ad S: W\mapsto[S,W]$ is the well-know adjoint action; $\Tr$ is the formal trace
 sending associative monomials to cyclically symmetrized monomials;
 $\Ending_X$ and $\Ending_Y$ select the $X$-ending and $Y$-ending associative monomials, respectively.

Burgunder's construction was devised to treat (a) systematically.
Ultimately, it is not a particularly effective way to deal with the Kashiwara--Vergne problem,
 as it is not particularly specific to that question.
However, we will argue that it is yet specific in some partial ways:
Similarly to the Dynkin--Specht--Wever lemma, Burgunder's construction
 allows principal versions, which solve the easier (odd) part of the Kashiwara--Vergne problem.

\textbf{Acknowledgements.}
 The author is thankful for the hospitality of the Alfréd Rényi Isntitute of Mathematics.
 The author is grateful to Christophe Reutenauer.

\snewpage
\section{The weighted Dynkin--Specht--Wever lemma}\plabel{sec:dsw}

\textbf{On free Lie algebras.}
Free Lie $K$-algebras (or any other kinds of free algebras) do not require particularly specific constructions.
But, it  is  useful to have  theorems providing  control over them.
The most basic and important observation about the free Lie $K$-algebra $\mathrm F_K^{\Lie}[X_\lambda :\lambda\in\Lambda]$
 is that it is multigraded by multiplicity of variables $X_\lambda$.
The corresponding statement is trivial for the free nonassociative $K$-algebra (magma, brace algebra) $\mathrm F^{\nonass}_K[X_\lambda :\lambda\in\Lambda]$,
and also for the free associative $K$-algebra   $\mathrm F_K[X_\lambda :\lambda\in\Lambda]$.
Now, the free Lie $K$-algebra $\mathrm F_K^{\Lie}[X_\lambda :\lambda\in\Lambda]$ can be thought
$\mathrm F^{\nonass}_K[X_\lambda :\lambda\in\Lambda]$
factorized further by the ideal $\mathrm I_K^{\Lie}[X_\lambda :\lambda\in\Lambda]$
generated by all elements $\bo[Y_1,Y_1\bo]$ and
$\bo[\bo[Y_1,Y_2\bo],Y_3\bo]+\bo[\bo[Y_2,Y_3\bo],Y_1\bo]+\bo[\bo[Y_3,Y_1\bo],Y_2\bo]$
such that $Y_1,Y_2,Y_3\in \mathrm F^{\nonass}_K[X_\lambda :\lambda\in\Lambda]$.
At first sight the homogeneity properties might not be clear, however, the generators expanded,
we see that $\mathrm I_K^{\Lie}[X_\lambda :\lambda\in\Lambda]$ is generated by all elements
$\bo[Z_1,Z_1\bo]$ and $\bo[Z_1,Z_2\bo]+\bo[Z_2,Z_1\bo]$ and
$\bo[\bo[Z_1,Z_2\bo],Z_3\bo]+\bo[\bo[Z_2,Z_3\bo],Z_1\bo]+\bo[\bo[Z_3,Z_1\bo],Z_2\bo]$
where $Z_1,Z_2,Z_3\in \mathrm F^{\nonass}_K[X_\lambda :\lambda\in\Lambda]$ are monomonials.
Thus $\mathrm I_K^{\Lie}[X_\lambda :\lambda\in\Lambda]$ is a homogeneously generated and therefore homogeneous
ideal (with respect to the multigrading),
making   $\mathrm F_K^{\Lie}[X_\lambda :\lambda\in\Lambda]$ multigraded.

\textbf{The  Dynkin--Specht--Wever lemma.}
This statement is a  consequence of the multigradedness of the free Lie $K$-algebras.
We present the weighted version.
Suppose that we assign a weight $w_\lambda\in \mathbb Z$  or $w_\lambda\in K$ to every variable $X_\lambda$.
Let $w: \mathrm F^{\Lie}_K[X_\lambda :\lambda\in\Lambda]\rightarrow \mathrm F^{\Lie}_K[X_\lambda :\lambda\in\Lambda]$  be the map which multiplies
by ${m_1}w_{\lambda_1}+\ldots+ {m_s}w_{\lambda_s}$ in multigrade $X_{\lambda_1}^{m_1}\ldots X_{\lambda_s}^{m_s}$.
Also note that commutator evaluation into the free associative algebra $\mathrm F_K[X_\lambda :\lambda\in\Lambda]$
 is compatible to multigrading.
For practical reasons we will use left-iterated higher commutators
$\bo[X_1,\ldots, X_n\bo]_{\mathrm L}=\bo[ X_1,\bo[X_2,\ldots, \bo[X_{n-1},X_n\bo]\ldots\bo]\bo]$.
\begin{theorem}\plabel{lem:DSW}(Weighted Dynkin--Specht--Wever lemma.) Suppose that
$P^{\Lie}(X_1,\ldots,X_n)\in\mathrm F^{\Lie}[ X_1,\ldots,X_n]$  expands in the commutator-evaluation to the noncommutative polynomial
\[P^{\ass}(X_1,\ldots,X_n)=\sum_s a_s X_{i_{s,1}}\ldots X_{i_{s,p_s}}. \]
Then
\[w(P^{\Lie}(X_1,\ldots,X_n))=\sum_s a_s \bo[X_{i_{s,1}}\ldots ,X_{i_{s,p_s-1}} ,w(X_{i_{s,p_s}})\bo]_{\mathrm L}. \]
\begin{proof}[Proof using derivations]
Consider $\mathrm F^{\Lie}_K[X_1,\ldots,X_n]\oplus K\mathbf u$, and extend the Lie bracket such
 that $\bo[\mathbf u,\mathbf u\bo]=0$, and $\bo[Q,\mathbf u\bo]=w(Q)$, $\bo[\mathbf u,Q\bo]=-w(Q)$.
This yields a Lie $K$-algebra; it is sufficient to check $[x,x]=0$, $[x,y]+[y,x]=0$, $[[x,y],z]+[[y,z],x]+[[z,x],y]=0$
 when $x,y,z$ are Lie-monomials or $\mathbf u$.
(This is taking the semidirect product with respect to the grade derivation.)
Then
\begin{align}\notag
w(P^{\Lie}(X_1,\ldots,X_n))&=\bo[P^{\Lie}(X_1,\ldots,X_n),\mathbf u\bo]
\\\notag&=(\ad P^{\Lie}(X_1,\ldots,X_n)  )\mathbf u\notag
\\\notag&=P^{\ass}(\ad X_1,\ldots,\ad X_n)\mathbf u
\\&=\sum_s a_s \bo[X_{i_{s,1}}\ldots ,X_{i_{s,p_s-1}} ,X_{i_{s,p_s}},\mathbf u\bo]_{\mathrm L}\notag
\\&=\sum_s a_s \bo[X_{i_{s,1}}\ldots ,X_{i_{s,p_s-1}} ,w(X_{i_{s,p_s}})\bo]_{\mathrm L}.\qquad\qedhere
\notag
\end{align}
\end{proof}
\end{theorem}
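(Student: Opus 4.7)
My approach is to realize the scalar operator $w$ as an \emph{inner} derivation of a one-dimensional Lie algebra extension of $\mathrm F^{\Lie}_K[X_1,\ldots,X_n]$, and then unfold $\ad P^{\Lie}$ through the commutator-evaluation $P^{\ass}$. The guiding intuition is that the right-hand side of the identity is precisely what one obtains when the operator $\ad P^{\Lie}=P^{\ass}(\ad X_1,\ldots,\ad X_n)$ is evaluated on a cleverly chosen element that converts weight-scaling into a bracket.

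The first step is to adjoin a formal element $\mathbf u$, declare $[\mathbf u,\mathbf u]=0$, and extend the bracket by the rule $[Q,\mathbf u]=w(Q)=-[\mathbf u,Q]$ for $Q\in\mathrm F^{\Lie}_K[X_1,\ldots,X_n]$. I would then verify that $\mathrm F^{\Lie}_K[X_1,\ldots,X_n]\oplus K\mathbf u$ is a Lie $K$-algebra. Antisymmetry is built in, and the only nontrivial instance of the Jacobi identity reduces to $w$ being a derivation: $w([Q_1,Q_2])=[w(Q_1),Q_2]+[Q_1,w(Q_2)]$. On two multigrade-homogeneous $Q_1,Q_2$ of multigrades $\mu_1,\mu_2$, both sides equal $(\langle w,\mu_1\rangle+\langle w,\mu_2\rangle)[Q_1,Q_2]$, because $[Q_1,Q_2]$ has multigrade $\mu_1+\mu_2$; hence the derivation property follows from the already-established multigradedness of $\mathrm F^{\Lie}_K[X_1,\ldots,X_n]$. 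I expect this step -- actually setting up the extension and checking Jacobi -- to be the only place where real care is needed, in particular since we are not assuming $\mathbb Q\subset K$ and must work with the multigrade-homogeneous decomposition rather than with scalar division.

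With the extension in hand, the remainder is almost automatic. Since $\ad$ is a Lie $K$-algebra homomorphism, the endomorphism $\ad P^{\Lie}$ of the extended algebra agrees with $P^{\ass}(\ad X_1,\ldots,\ad X_n)$, obtained by substituting $\ad X_i$ for $X_i$ into the associative expansion. Evaluating this operator on $\mathbf u$ yields, on one hand, $(\ad P^{\Lie})\mathbf u=[P^{\Lie},\mathbf u]=w(P^{\Lie})$, and on the other hand $\sum_s a_s\,\bo[X_{i_{s,1}},\ldots,X_{i_{s,p_s}},\mathbf u\bo]_{\mathrm L}$. The innermost bracket $[X_{i_{s,p_s}},\mathbf u]=w(X_{i_{s,p_s}})$ already lies in $\mathrm F^{\Lie}_K[X_1,\ldots,X_n]$, so the whole expression descends back into the original Lie algebra and matches the right-hand side of the theorem verbatim.
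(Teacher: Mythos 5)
Your proposal is correct and coincides with the paper's own ``Proof using derivations'': you adjoin $\mathbf u$, extend the bracket by $[Q,\mathbf u]=w(Q)$ (the semidirect product with the grade derivation), and evaluate $\ad P^{\Lie}=P^{\ass}(\ad X_1,\ldots,\ad X_n)$ on $\mathbf u$. Your explicit reduction of the Jacobi identity to the derivation property of $w$ via multigradedness is a welcome elaboration of the step the paper only sketches, but the argument is otherwise identical.
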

\begin{proof}[Proof using noncommutative polynomials.]
First we prove the statement when $P^{\Lie}$ is homogeneous in the multigrading
such that every variable has multiplicity at most $1$.
We can actually assume that every variable $X_1,\ldots,X_n$ is used exactly once.
Then we can write
\begin{equation}
P^{\ass}(X_1,\ldots,X_n)=\sum_{\sigma\in\Sigma_n}a_\sigma X_{\sigma(1)}\ldots a_\sigma X_{\sigma(n)}.\plabel{eq:EE}
\end{equation}

Let us fix an arbitrary element $k\in\{1,\ldots,n\}$.
Now, $P^{\Lie}(X_1,\ldots,X_n)$ is a Lie-polynomial, thus, using standard commutator rules,
we can write it as linear combination of terms $\bo[X_{i_1},\ldots,X_{i_{n-1}} ,X_k\bo]_{\mathrm L}$,
where $\{i_1,\ldots,i_{n-1}\}=\{1,\ldots,n\}\setminus\{k\}$. However, evaluated in the
 noncommutative polynomial algebra, such a commutator expression gives
only one monomial contribution $X_{i_1}\ldots X_{i_{n-1}} X_k $ such that the last term is $X_k$.
Thus, the coefficient of $\bo[X_{i_1},\ldots,X_{i_{n-1}} ,X_k\bo]_{\mathrm L}$ can be read off from \eqref{eq:EE}.
We find that
\begin{equation}
P^{\Lie}(X_1,\ldots,X_n)=
\sum_{\sigma\in\Sigma_n, \sigma(n)=k}a_\sigma \bo[X_{\sigma(1)},\ldots ,X_{\sigma(n-1)},X_k\bo]_{\mathrm L}.\plabel{eq:EEk}
\end{equation}
(This is the noncommutative comparison technique of Dynkin.)
Multiplying \eqref{eq:EEk} by the weight $w_k$, and adding these for all $k\in\{1,\ldots,n\}$, we obtain the statement.

In the general case, $P^{\Lie}$ is a linear combination of monomials of $X_1,\ldots,X_n$.
In those monomials we can change the variables $X_i$ by some other ones from $X_{i,1},\ldots, X_{i,f_i}$
erratically such that in the resulted $\tilde P^{\Lie}$ every variable has multiplicity at most $1$.
For the new variables  $X_{i,j}$ we define the weight $w_{i,j}$ as $w_i$.
Now we can apply statement for $\tilde P^{\Lie}$, and then we can simply forget the second indices in the variables $X_{i,j}$.
(This is the technique of non-deterministic polarization.)
\end{proof}
Similar statements hold with respect to the right-iterated Lie-com\-mu\-ta\-tors
 $\bo[X_1,\ldots, X_n\bo]_{\mathrm R}$ $=\bo[\bo[\ldots\bo[ X_1,X_2\bo],\ldots,X_{n-1}\bo],X_n\bo]$.
The ``unweighted''  Dynkin--Specht--Wever lemma is when every weight  $w_i$ is $1$, and
the weight map is $w=\degg$, multiplcation by the joint degree of variables.
If $\mathbb Q\subset K$, then, in the unweighted case,  $w^{-1}=\degg^{-1}$ can be applied, showing that
$P^{\ass}$ determines $P^{\Lie}$, which is the statement of the Magnus--Witt theorem.
After the unweighted case, the most important case is when only one variable has nonzero weight.
We spell out: If $P^{\Lie}$ (which is as earlier) is $h_k$-homogeneous in the variable $X_k$, then
\[h_k\cdot P^{\Lie}(X_1,\ldots,X_n)=\sum_{\substack{s \\ i_{s,p_s}=k}}  a_s\bo[X_{i_{s,1}}\ldots ,X_{i_{s,p_s-1}} , X_{i_{s,p_s}} \bo]_{\mathrm L}. \]
Note that the Dynkin--Specht--Wever lemma retains interest even in the associative setting.
Abstractly this is when  $P^{\ass}\in \mathrm F^{\cmr}_K[X_1,\ldots,X_n]$ is
 given and we look for $P^{\nonass}\in \mathrm F^{\nonass}_K[X_1,\ldots,X_n]$
 such that $P^{\nonass}$ commutator evaluates to $P^{\ass}$.
\begin{example}
The following example illustrates the weighted Dynkin--Specht--Wever lemma in the
original setting of Dynkin \cite{Dy}.

As the components of $\log(\exp( X)  \exp(  Y))$ are commutator polynomials
(which can also be shown in several ways),
we can apply the Dynkin--Specht--Wever lemma  to (the homogeneous parts) of the power series
expansion
\begin{align}\log((\exp X)(\log Y))=&\log\left(1+\sum_{\substack{p,q\geq 0\\p+q\geq1}}\frac{X^pY^q}{p!\,q!}  \right)
\plabel{eq:eppe}\\=&\sum_{k=1}^\infty\frac{(-1)^{k-1}}{k}\sum_{
\substack{p_1\ldots,p_k,q_1\ldots,q_k\geq0\\ p_1+q_1,\ldots,p_k+q_k\geq 1}}
\frac{X^{p_1}Y^{q_1}\ldots X^{p_k}Y^{q_k}}{p_1!\ldots p_k!\,q_1!\ldots q_k!}.
\notag
\end{align}
In this standard manner, commas in $[]_{\mathrm L}$ omitted,  we obtain the formula of Dynkin \cite{Dy},
\begin{multline}
\log((\exp X)(\exp Y))=\\=\sum_{k=1}^{\infty}\frac{(-1)^{k-1}}{k}\sum_{
\substack{p_1\ldots,p_k,q_1\ldots,q_k\geq0\\ p_1+q_1,\ldots,p_k+q_k\geq 1}}
\frac{[X^{p_1}Y^{q_1}\ldots X^{p_k}Y^{q_k}]_{\mathrm L}}{(p_1+\ldots+p_k+q_1+\ldots+q_k)p_1!\ldots p_k!\,q_1!\ldots q_k!}.
\plabel{eq:eps}\end{multline}

Some works, e.~g.~Kol\'a\v{r}, Michor, Slov\'ak \cite{KMS}, or
Duistermaat, Kolk \cite{DK}
present
\begin{multline}
\log((\exp X)(\exp Y))=\\=Y+\sum_{k=0}^{\infty}\frac{(-1)^{k}}{k+1}\sum_{
\substack{p_1\ldots,p_k,q_1\ldots,q_k\geq0\\ p_1+q_1,\ldots,p_k+q_k\geq 1}}
\frac{[X^{p_1}Y^{q_1}\ldots X^{p_k}Y^{q_k}X]_{\mathrm L}}{(p_1+\ldots+p_k+1)p_1!\ldots p_k!\,q_1!\ldots q_k!}
\plabel{eq:epps}\end{multline}
 as the BCH formula/ ``Dynkin's formula'', which they prove by
 differential equational/ geometric means, but essentially just by using the old Schur(--Poincar\'e)  formula
\[\frac{\mathrm d \log(\exp(tX)\exp(Y))}{\mathrm dt}=\beta(-(\exp \ad t X)(\exp \ad Y))X;\]
i.~e.~by very classical methods.
(Cf.~ Schur \cite{Sch}, \cite{Sch2}, Poincar\'e \cite{P},
Duistermaat \cite{Du}, Achilles, Bonfigioli \cite{AB}, Bonfiglioli, Fulci \cite{BF}.)

Now, \eqref{eq:epps} can also be realized algebraically from \eqref{eq:eppe} but by applying the weighted Dynkin--Specht--Wever lemma
with weight prescription $\deg_X X=1$, $\deg_X Y=0$:
The part when the total weight is $0$ can be seen to be  $Y$ easily.
(We cannot apply $(\degg_X)^{-1}$ on that part, therefore it is dealt separately.)
Then  relabel $k$ to $j+1$, and  notice that  only the $q_{j+1}=0$, $p_{j+1}=1$ part survives
weighting and commutatoring, respectively.

One can also apply the weight prescription $\deg_Y X=0,\deg_Y Y=1$. Another possibility is
to apply $\log((\exp X)(\log Y))=-\log((\exp -Y)(\exp -X))$, which also corresponds to the rewriting of the $[]_{\mathrm R}$-version
to $[]_{\mathrm L}$-terminology.
Thus we have a couple of formulas of Dynkin type.
(The argument also works in the other direction:
As the $\deg_X$ and $\deg_Y$ weightings can be obtained by classical methods,
 taking appropriate convex combinations multigradewise, we obtain Dynkin's actual formula \eqref{eq:eps}.)
\qedexer
\end{example}
Regarding the proof of Theorem \ref{lem:DSW}, this is a situation where an abstract approach
 using derivations and a more classical approach using non-commutative polynomials are equally available.
Both approaches have their advantages:

In terms of derivations, we might have the situation that
\[P^{\ass}(X_1,\ldots,X_n)=\sum_s a_s P^{\ass}_{{s,1}}\ldots P^{\ass}_{{s,p_s}}, \]
where $P^{\ass}_{{s,j}}$ is the commutator evaluation of  $P^{\Lie}_{{s,j}}$.
Then the method of derivations quickly tells that
\[w(P^{\Lie}(X_1,\ldots,X_n))=\sum_s a_s \bo[P^{\Lie}_{{s,1}}\ldots ,P^{\Lie}_{{s,p_s-1}} ,w(P^{\Lie}_{{s,p_s}})\bo]_{\mathrm L}. \]

In terms of the polynomial method, it is used to obtain further information by
Dzhumadil’daev \cite{Dzh}, where the weighted Dynkin--Specht--Wever lemma is considered in the
multilinear (multiplicity-free) case.
Another example for its application is the following
%\snewpage
\section{Centrally bracketed Dynkin--Specht--Wever lemma}
For the sake of simplicity, we present an unweighted version.
\begin{theorem}\plabel{th:CDSW} Suppose that
$P^{\Lie}(X_1,\ldots,X_n)\in\mathrm F_K^{\Lie}[X_1,\ldots,X_n]$  is of homogeneity degree $h$ (in its variables jointly),
and it expands in the commutator evaluation to the noncommutative polynomial
\[P^{\ass}(X_1,\ldots,X_n)=\sum_s a_s X_{i_{s,1}}\ldots X_{i_{s,h}}. \]
Then
\[h(h-1)\cdot P^{\Lie}(X_1,\ldots,X_n)=\sum_s \sum_{p=1}^{n-1}
a_s\bo[\bo[X_{i_{s,1},},\ldots ,X_{i_{s,p}}\bo]_{\mathrm L}, \bo[X_{i_{s,p+1}} ,\ldots,X_{i_{s,h}} \bo]_{\mathrm R}\bo]. \]
\begin{proof}
Again, first we prove the statement when $P^{\Lie}$ is homogeneous in the multigrading
such that every variable has multiplicity at most $1$.
We actually assume that every variable $X_1,\ldots,X_n$ is used exactly once. Then
\begin{equation}
P^{\ass}(X_1,\ldots,X_n)=\sum_{\sigma\in\Sigma_n}a_\sigma X_{\sigma(1)}\ldots a_\sigma X_{\sigma(n)}.\plabel{eq:EEE}
\end{equation}
Let us fix  $k\neq l\in\{1,\ldots,n\}$.
Using standard commutator rules,
 $P^{\Lie}(X_1,\ldots,X_n)$,
 can be written  as a linear combination of terms
 $\bo[\bo[X_{i_1},\ldots,X_{i_{p}} ,X_k\bo]_{\mathrm L}, \bo[X_l ,X_{i_{p+1}},\ldots,X_{i_{n-2}}\bo]_{\mathrm R}\bo]$,
 where $\{i_1,\ldots,i_{n-2}\}=\{1,\ldots,n\}\setminus\{k,l\}$.
However, evaluated in the noncommutative polynomial algebra, such a commutator expression gives
 only one monomial contribution $X_{i_1}\ldots X_{i_p}X_k X_lX_{i_{p+1}} \ldots X_{i_{n-2}} $
 such that $X_k$ is immediately followed by $X_l$.
Compared this to \eqref{eq:EEE}, we  find that
\begin{equation}
P^{\Lie}(X_1,\ldots,X_n)
=\sum_{\substack{\sigma\in\Sigma_n,\\ \sigma(p)=k,\\ \sigma(p+1)=l}}a_\sigma
\bo[\bo[X_{\sigma(1)},\ldots,X_{\sigma(p-1)} ,X_k\bo]_{\mathrm L},
\bo[X_l ,X_{\sigma(p+2)},\ldots,X_{\sigma(n)} \bo]_{\mathrm R}\bo].
\plabel{eq:FFk}
\end{equation}
Summing this for all possible pairs $k,l$, we obtain
\begin{equation}
n(n-1)\cdot P^{\Lie}_n(X_1,\ldots,X_n)
=\sum_{\substack{\sigma\in\Sigma_n,\\ 1\leq p<n}}a_\sigma
\bo[\bo[X_{\sigma(1)},\ldots ,X_{\sigma(p)}\bo]_{\mathrm L}, \bo[X_{\sigma(p+1)} ,\ldots,X_{\sigma(n)}
\bo]_{\mathrm R}\bo].
\plabel{eq:FFn}
\end{equation}
This proves the statement in the special case.
The general case follows from considering a nondeterministic polarization as before.
\end{proof}
\end{theorem}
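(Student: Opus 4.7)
The plan is to parallel the strategy from the second proof of Theorem \ref{lem:DSW}: first establish the statement in the multilinear case in which each variable appears with multiplicity at most one (so the number of variables coincides with the joint degree $h$), and then reduce the general case by the non-deterministic polarization trick already used there. In the multilinear case I would expand
$$P^{\ass}(X_1,\ldots,X_n)=\sum_{\sigma\in\Sigma_n}a_\sigma X_{\sigma(1)}\cdots X_{\sigma(n)},$$
fix an ordered pair of distinct indices $k\neq l$, and argue that any multilinear Lie polynomial in $X_1,\ldots,X_n$ can be written, using only antisymmetry and Jacobi, as a linear combination of centrally bracketed terms $\bo[\bo[X_{i_1},\ldots,X_{i_{p-1}},X_k\bo]_{\mathrm L},\bo[X_l,X_{i_{p+1}},\ldots,X_{i_{n-2}}\bo]_{\mathrm R}\bo]$ in which $X_k$ and $X_l$ sit at the innermost positions of the left and right factors respectively.

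Once such a presentation is available, the noncommutative comparison technique of Dynkin pins down the coefficients: each centrally bracketed term, when expanded in the free associative algebra, contributes exactly one monomial in which $X_k$ is immediately followed by $X_l$, namely $X_{i_1}\cdots X_{i_{p-1}}X_kX_lX_{i_{p+1}}\cdots X_{i_{n-2}}$. The adjacency $X_kX_l$ therefore uniquely identifies the originating bracketed term, so the coefficients of the centrally bracketed expansion of $P^{\Lie}$ are forced to be precisely the coefficients $a_\sigma$ of those associative monomials in which $X_k$ precedes $X_l$ consecutively. This produces one identity per fixed pair $(k,l)$; summing it over all $n(n-1)$ ordered pairs of distinct indices covers every interior split position $p$ and accumulates the factor $n(n-1)=h(h-1)$ on the left. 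The passage to the general case then follows exactly as in Theorem \ref{lem:DSW}: relabel repeated variables with fresh labels to enforce multiplicity one, apply the multilinear version, and then forget the secondary labels.

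The main technical obstacle is the structural claim at the heart of the multilinear case, namely that an arbitrary multilinear Lie polynomial admits a centrally bracketed presentation with two prescribed innermost variables. Classical DSW only pins down one variable at an innermost position, so producing two simultaneously on opposite sides of an outer bracket is not immediate. A natural route is to first decompose $P^{\Lie}$ as a sum of outer commutators $[A,B]$ and then apply left-iterated DSW to $A$ (placing $X_k$ at the inner right of $A$) and right-iterated DSW to $B$ (placing $X_l$ at the inner left of $B$), using Jacobi-based rearrangements to deal with the terms in which $X_k$ or $X_l$ lands on the wrong side. After this combinatorial rearrangement is set up correctly, the remainder of the argument is essentially bookkeeping via noncommutative comparison.
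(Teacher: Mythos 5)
Your proposal is correct and follows essentially the same route as the paper: reduce to the multilinear case, present $P^{\Lie}$ in centrally bracketed form with a pinned ordered pair $(k,l)$, read off the coefficients by comparing against the associative monomials in which $X_k$ is immediately followed by $X_l$, sum over the $n(n-1)$ ordered pairs, and finish by non-deterministic polarization. The structural claim you single out as the main obstacle is exactly what the paper dispatches as ``standard commutator rules'', and your sketched justification (separate $X_k$ and $X_l$ across the outermost bracket via Jacobi, then left-normalize the factor containing $X_k$ and right-normalize the one containing $X_l$) is the standard way to supply the missing detail.
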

%\snewpage
%This is `C'-bracketed version of the well-known statement.
Weighted versions are also possible, but they require a sort of quadrating grading.
Therefore we formulate only special cases.
Suppose that
$P^{\Lie}(X_1,\ldots,X_n)\in\mathrm F^{\Lie}[ X_1,\ldots,X_n]$ expands in the commutator evaluation to the noncommutative polynomial
\[P^{\ass}(X_1,\ldots,X_n)=\sum_s a_s X_{i_{s,1}}\ldots X_{i_{s,p_s}}. \]

Assume that $k\neq l\in\{1,\ldots,n\}$ and $P^{\Lie}$ is $h_k$-homogeneous in $X_k$ and $h_l$-homogeneous in $X_l$.
Then
\[h_k\cdot h_l\cdot P^{\Lie}(X_1,\ldots,X_n)=\sum_s \sum_{\substack{1\leq r<p_s\\i_{s,r}=k,\,i_{s,r+1}=l }}
a_s\bo[\bo[X_{i_{s,1},},\ldots ,X_{i_{s,r}}\bo]_{\mathrm L}, \bo[X_{i_{s,r+1}} ,\ldots,X_{i_{s,p_s}} \bo]_{\mathrm R}\bo]. \]

Assume that $k\in\{1,\ldots,n\}$ and $P^{\Lie}$ is $h_k$-homogeneous in $X_k$ .
Then
\[h_k (h_k-1)\cdot P^{\Lie}(X_1,\ldots,X_n)=\sum_s \sum_{\substack{1\leq r<p_s\\i_{s,r}=i_{s,r+1}=k }}
a_s\bo[\bo[X_{i_{s,1},},\ldots ,X_{i_{s,r}}\bo]_{\mathrm L}, \bo[X_{i_{s,r+1}} ,\ldots,X_{i_{s,p_s}} \bo]_{\mathrm R}\bo]. \]
These latter statements, in general, show that commutators are mixing the variables well.

\snewpage
\section{The Dynkin--Specht--Wever lemma in the universal enveloping algebra}
\plabel{sec:dyn}
Originally, the DSW lemma is to be applied in the free setting.
It can be applied in the universal enveloping algebra $\mathcal U\mathfrak g$
 if that contains a large part of the noncommutative polynomial algebra.
This strong requirement can be relaxed a little bit if $\mathbb Q\subset K$.

\textbf{Reminder: the symmetric Poincaré--Birkhoff--Witt theorem.}
Recall that  if $\mathbb Q\subset K$, then the canonical homomorphism
$\boldsymbol m:\bigotimes\mathfrak g\rightarrow\mathcal U\mathfrak g$
is such that its restriction
\begin{equation*}\boldsymbol m_\Sigma:\textstyle{\bigotimes_\Sigma\mathfrak g}\rightarrow\mathcal U\mathfrak g\end{equation*}
to symmetrized tensor products (generated by $a_1\otimes_\Sigma \ldots\otimes_\Sigma a_n=\frac1{n!}\sum_{\sigma\in\Sigma_n}g_{\sigma(1)}\otimes\ldots\otimes g_{\sigma(n)}$) is an isomorphism.
This the symmetric version of the Poincaré--Birkhoff-Witt theorem, see Cohn \cite{CC}.
Thus we can assume that $\mathfrak g\subset \mathcal U\mathfrak g$.
The map $\mu_\Sigma=(\boldsymbol m_\Sigma)^{-1}\circ\boldsymbol m$ can be described quite explicitly:
There are multlinear rational Lie polynomials $\mu_n^{\Lie}(X_1,\ldots,X_n)$ $(n\geq1)$ such that
 $\mu_\Sigma:\bigotimes\mathfrak g\rightarrow  \bigotimes_\Sigma \mathfrak g$
is given by
\[
\mu_\Sigma(x_1\otimes\ldots\otimes x_n)=\sum_{\substack{I_1\dot\cup\ldots \dot\cup I_s=\{1,\ldots,n\}\\I_k=\{i_{k,1},\ldots,i_{k,p_k}\}\neq
\emptyset \\i_{k,1}<\ldots<i_{k,p_k}}}\frac1{s!}\,
 \mu^{\Lie}_{p_1}(x_{i_{1,1}},\ldots ,x_{i_{1,p_1}})\otimes\ldots\otimes \mu^{\Lie}_{p_s}(x_{i_{s,1}},\ldots ,x_{i_{s,p_s}})\notag
\]
(and it acts trivially in the $0$th order).
Due to its role, $\mu_n^{\Lie}(X_1,\ldots,X_n)$ is the so-called first canonical projection.
$\mu_n^{\Lie}(X_1,\ldots,X_n)$ expands in the commutator algebra as
\begin{equation}
 \mu_n^{\ass}(X_1,\ldots,X_n)=\sum_{\sigma\in\Sigma_n}\mu_\sigma X_{\sigma(1)}\ldots  X_{\sigma(n)}.\plabel{eq:FF}
\end{equation}
where the coefficients $\mu_\sigma$ can be given explicitly, see Solomon \cite{SS}, Reutenauer \cite{R}.
(cf. also Dynkin \cite{Dyy}, Magnus \cite{M}, Mielnik, Pleba\'nski \cite{MP}).
In terms of $\mathcal U\mathrm F^{\Lie}[X_\lambda:\lambda\in\Lambda]\simeq
\mathrm F[X_\lambda:\lambda\in\Lambda]$ the expression $\mu_n^{\ass}$ yields a ``Lie idempotent'' (in grade $n$),
which, due to the actual shape of the coefficients is also called as the Eulerian idempotent.

\textbf{DSW lemma in the universal enveloping algebra.}
By the (simplest layer of the) unweighted DSW lemma,
\begin{equation}
n\cdot\mu_n^{\Lie}(X_1,\ldots,X_n)=
\sum_{\sigma\in\Sigma_n}\mu_\sigma \bo[X_{\sigma(1)},\ldots ,X_{\sigma(n-1)},X_{\sigma(n)}\bo]_{\mathrm L}.\plabel{eq:EEn}
\end{equation}
\begin{prop}\plabel{prop:dsw1} ($\mathbb Q\subset K$)
Suppose that the $K$-submodule  $W\subset \botimes^n\mathfrak g$ is  closed for actions of  $\Sigma_n$
inducing  permutations in the order of tensor product.
Also assume that $\bo m|_{W} :W\rightarrow\mathcal U\mathfrak g$ is injective.
Now, if  \[P=\sum_{\lambda}a_{1,\lambda}\otimes\ldots\otimes a_{n,\lambda} \]
such that $P\in W$ and $\bo m(P)\in\mathfrak g$, then
\[n\cdot \bo m(P)=\sum_{\lambda}\,\bo[a_{1,\lambda},\ldots, a_{n,\lambda}\bo]_{\mathrm L}.\]
\begin{proof} $\bo m(P)\in\mathfrak g$ means that in $\mathcal U\mathfrak g$
\begin{equation}
\sum_{\lambda}a_{1,\lambda}\ldots a_{n,\lambda} =\bo m(P)=
\sum_{\lambda}\mu_n(a_{1,\lambda}\ldots, a_{n,\lambda})=
\sum_{\sigma\in\Sigma_n}\mu_\sigma\sum_{\lambda}a_{\sigma(1),\lambda}\ldots a_{\sigma(n),\lambda}.
\plabel{eq:tef}
\end{equation}
(By the PBW theorem,  in $\mathcal U \mathfrak g$, we are allowed to confuse $\mu^{\Lie}_n$ and $\mu^{\ass}_n$.)
Due to the injectivity of $\bo m|_{W}$, we find
\[\sum_{\lambda}a_{1,\lambda}\otimes\ldots \otimes a_{n,\lambda} =
\sum_{\sigma\in\Sigma_n}\mu_\sigma\sum_{\lambda}a_{\sigma(1),\lambda}\otimes \ldots \otimes a_{\sigma(n),\lambda}\]
(both sides are in $W$, because $W$ is permutation-invariant).
Then, applying the multilinear $\bo[,\bo]_{\mathrm L}$,  using \eqref{eq:EEn}, and, finally, \eqref{eq:tef}, we find
\begin{align*}
\sum_{\lambda}\bo[a_{1,\lambda},\ldots , a_{n,\lambda}\bo]_{\mathrm L} &=
\sum_{\sigma\in\Sigma_n}\mu_\sigma\sum_{\lambda}\bo[a_{\sigma(1),\lambda},
\ldots , a_{\sigma(n),\lambda}\bo]_{\mathrm L}=\\&=\sum_{\lambda}n\cdot\mu_n(a_{1,\lambda}, \ldots , a_{n,\lambda})
%=\sum_{\lambda}n\cdot a_{1,\lambda}\ldots a_{n,\lambda}
=n\cdot \bo m(P).\notag\qedhere
\end{align*}
%This is what we wanted to prove.
\end{proof}
\end{prop}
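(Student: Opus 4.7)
My plan is to exploit the hypothesis $\bo m(P)\in\mathfrak g$ together with the first canonical projection $\mu_n^{\Lie}$ (whose existence and explicit expansion \eqref{eq:FF} come from the symmetric PBW theorem) and then crack the resulting symmetric identity with the DSW lemma in the form \eqref{eq:EEn}.

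First, since $\mu_n^{\Lie}$ is an idempotent that fixes $\mathfrak g\subset \mathcal U\mathfrak g$, the element $\bo m(P)$ equals $\sum_\lambda \mu_n^{\Lie}(a_{1,\lambda},\ldots,a_{n,\lambda})$ in $\mathcal U\mathfrak g$; expanding by \eqref{eq:FF} this gives an identity
\[\sum_\lambda a_{1,\lambda}\ldots a_{n,\lambda}=\sum_{\sigma\in\Sigma_n}\mu_\sigma\sum_\lambda a_{\sigma(1),\lambda}\ldots a_{\sigma(n),\lambda}\]
inside $\mathcal U\mathfrak g$. By the permutation-invariance of $W$, the right-hand side is $\bo m$ applied to a tensor that still lies in $W$; since $\bo m|_W$ is injective, this identity lifts to a tensor identity
\[\sum_\lambda a_{1,\lambda}\otimes\ldots\otimes a_{n,\lambda}=\sum_{\sigma\in\Sigma_n}\mu_\sigma\sum_\lambda a_{\sigma(1),\lambda}\otimes\ldots\otimes a_{\sigma(n),\lambda}\]
in $\botimes^n\mathfrak g$.

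Finally, I apply the multilinear left-iterated commutator $\bo[\cdot,\ldots,\cdot\bo]_{\mathrm L}$ term by term to both sides of this tensor identity. On the right, formula \eqref{eq:EEn} (the unweighted DSW lemma applied to $\mu_n^{\Lie}$) collapses the $\mu_\sigma$-weighted sum of left-iterated commutators into $n\cdot\mu_n^{\Lie}$, producing $n\cdot\sum_\lambda \mu_n^{\Lie}(a_{1,\lambda},\ldots,a_{n,\lambda})=n\cdot\bo m(P)$; the left-hand side is exactly the desired $\sum_\lambda \bo[a_{1,\lambda},\ldots,a_{n,\lambda}\bo]_{\mathrm L}$.

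The only real obstacle is the middle lifting step: one must verify that each permuted tensor $\sum_\lambda a_{\sigma(1),\lambda}\otimes\ldots\otimes a_{\sigma(n),\lambda}$ stays in $W$, so that both sides of the symmetrized identity sit inside $W$ and the injectivity of $\bo m|_W$ can legitimately be invoked. This is precisely the role of the two hypotheses placed on $W$; once it is cleared, the argument is just PBW combined with DSW.
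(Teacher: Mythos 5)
Your proposal is correct and follows essentially the same route as the paper's own proof: use the first canonical projection to rewrite $\bo m(P)$ as the $\mu_\sigma$-symmetrized sum in $\mathcal U\mathfrak g$, lift this to a tensor identity in $W$ via permutation-invariance and the injectivity of $\bo m|_W$, and then apply the multilinear left-iterated bracket together with \eqref{eq:EEn}. The only cosmetic difference is that you justify $\bo m(P)=\sum_\lambda\mu_n^{\Lie}(a_{1,\lambda},\ldots,a_{n,\lambda})$ by saying the Eulerian idempotent fixes $\mathfrak g$, which is the same fact the paper invokes implicitly in \eqref{eq:tef}.
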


The discussion extends to the weighted case.
If we assign the weight $w_i\in K$ to the variables $X_i$ (for accounting purposes), then one has
\begin{equation}
(w_1+\ldots+w_n)\cdot\mu^{\Lie}_n(X_1,\ldots,X_n)=
\sum_{\sigma\in\Sigma_n}\mu_\sigma \bo[X_{\sigma(1)},\ldots
,X_{\sigma(n-1)},w_{\sigma(n)} X_{\sigma(n)}\bo]_{\mathrm L}.
\plabel{eq:EEw}
\end{equation}

Assume that $\mathfrak g$ is $K$-graded  as a Lie $K$-algebra.
Then $\bigotimes\mathfrak g$ is also $K$-graded naturally.
Let $w: \bigotimes\mathfrak g\rightarrow\bigotimes\mathfrak g$ be the map which acts
as multiplication by $k$ on the component of grade $k\in K$.
\begin{prop}\plabel{prop:dsw2} ($\mathbb Q\subset K$)
Suppose $W\subset \botimes\mathfrak g$ is  closed for actions of all $\sigma_r\in\Sigma_r$
inducing  permutations in the $r$th tensor order, and it is
also closed for selecting homogeneous components from $\bigotimes\mathfrak g$ position-wise in the tensor products.
Also assume that $\bo m|_{W} :W\rightarrow\mathcal U\mathfrak g$ is injective.
Now, if
\[P=\sum_{n,\lambda}a_{1,\lambda}\otimes\ldots\otimes a_{n,\lambda} \]
such that $P\in W$ and $\bo m(P)\in\mathfrak g$, then
\[\bo m(w(P))=\sum_{\lambda}\,\bo[a_{1,\lambda},\ldots,a_{n-1,\lambda}, w(a_{n,\lambda})\bo]_{\mathrm L}.\]
\begin{proof} We can assume that $a_{i,\lambda}$ is of homogeneous grade $w_{i,\lambda}$.
Then the previous proof works but using  \eqref{eq:EEw} instead of \eqref{eq:EEn}.
\end{proof}
\end{prop}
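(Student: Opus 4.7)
My plan is to mirror the argument of Proposition \ref{prop:dsw1} with the unweighted DSW identity \eqref{eq:EEn} replaced by its weighted analogue \eqref{eq:EEw}. The one new hypothesis I need to exploit is that $W$ is closed under position-wise selection of homogeneous components, which lets me reduce to the situation where each tensor factor $a_{i,\lambda}$ is individually $K$-homogeneous of some grade $w_{i,\lambda}\in K$; under this reduction $w(a_{n,\lambda})=w_{n,\lambda}\cdot a_{n,\lambda}$, and $w$ acts on $a_{1,\lambda}\otimes\ldots\otimes a_{n,\lambda}$ as the scalar $w_{1,\lambda}+\ldots+w_{n,\lambda}$.

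First I would perform that homogeneous decomposition inside $W$, and then repeat the first two moves of the proof of Proposition \ref{prop:dsw1} essentially verbatim: the assumption $\bo m(P)\in\mathfrak g$ combined with the first canonical projection and PBW gives the identity $\sum_\lambda a_{1,\lambda}\ldots a_{n,\lambda}=\sum_{\sigma\in\Sigma_n}\mu_\sigma\sum_\lambda a_{\sigma(1),\lambda}\ldots a_{\sigma(n),\lambda}$ in $\mathcal U\mathfrak g$, and permutation-closure of $W$ together with injectivity of $\bo m|_W$ lifts this identity to $\botimes\mathfrak g$. Then I would apply to both sides the multilinear map $(b_1,\ldots,b_n)\mapsto\bo[b_1,\ldots,b_{n-1},w(b_n)\bo]_{\mathrm L}$: the right-hand side, via \eqref{eq:EEw} applied term-by-term with weights $w_{i,\lambda}$, collapses to $\sum_\lambda(w_{1,\lambda}+\ldots+w_{n,\lambda})\mu_n^{\Lie}(a_{1,\lambda},\ldots,a_{n,\lambda})$, which upon applying $\bo m$ (and using that $\bo m\circ\mu_n^{\Lie}$ agrees with $\bo m$ on $\mathfrak g$-valued outputs, by PBW) is precisely $\bo m(w(P))$; meanwhile the left-hand side is $\sum_\lambda\bo[a_{1,\lambda},\ldots,a_{n-1,\lambda},w(a_{n,\lambda})\bo]_{\mathrm L}$, which yields the stated equality.

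The only step I expect to require any care is the homogeneous reduction: the weights $w_{\sigma(n)}$ appearing in \eqref{eq:EEw} are only meaningful scalars once each $a_{i,\lambda}$ is of pure grade, and it is precisely the closure of $W$ under position-wise selection of homogeneous components that secures this. Once the reduction is in place, the remainder is a routine translation of the Proposition \ref{prop:dsw1} calculation, with the weighted DSW lemma doing all the substantive work.
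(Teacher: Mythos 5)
Your proposal is correct and follows essentially the same route as the paper: the paper's proof likewise reduces to the case where each $a_{i,\lambda}$ is of homogeneous grade (using the position-wise closure hypothesis) and then reruns the Proposition \ref{prop:dsw1} argument with \eqref{eq:EEw} in place of \eqref{eq:EEn}. You have merely spelled out the details that the paper leaves implicit.
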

The application of the centrally bracketed DSW lemma tells
\begin{equation}
n(n-1)\cdot\mu^{\Lie}_n(X_1,\ldots,X_n)
=\sum_{\substack{\sigma\in\Sigma_n,\\ 1\leq p<n}}\mu_\sigma
\bo[\bo[X_{\sigma(1)},\ldots ,X_{\sigma(p)}\bo]_{\mathrm L}, \bo[X_{\sigma(p+1)} ,\ldots,X_{\sigma(n)}
\bo]_{\mathrm R}\bo].
\plabel{eq:FFFn}
\end{equation}
\begin{prop}\plabel{prop:dsw3} ($\mathbb Q\subset K$)
Suppose that the $K$-submodule  $W\subset \botimes^n\mathfrak g$ is  closed for actions of  $\Sigma_n$
inducing  permutations in the order of tensor product.
Also assume that $\bo m|_{W} :W\rightarrow\mathcal U\mathfrak g$ is injective.
Now, if  \[P=\sum_{\lambda}a_{1,\lambda}\otimes\ldots\otimes a_{n,\lambda} \]
such that $P\in W$ and $\bo m(P)\in\mathfrak g$, then
\[n(n-1)\cdot \bo m(P)
=\sum_{\lambda}\sum_{p=1}^{n-1}\,
\bo[\bo[a_{1,\lambda},\ldots,a_{p,\lambda}\bo]_{\mathrm L},
\bo[a_{p+1,\lambda} ,\ldots,a_{n,\lambda} \bo]_{\mathrm R}\bo]\]
\begin{proof}
We can argue as in Proposition \ref{prop:dsw1} but with respect to
\eqref{eq:FFFn}.
\end{proof}
\end{prop}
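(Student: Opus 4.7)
The strategy is to transplant the argument of Proposition~\ref{prop:dsw1} verbatim, substituting the centrally bracketed identity \eqref{eq:FFFn} for \eqref{eq:EEn}. First I would exploit $\bo m(P)\in\mathfrak g$ together with the fact that elements of $\mathfrak g\subset\mathcal U\mathfrak g$ are fixed by the first canonical projection $\mu_n$ to write, in $\mathcal U\mathfrak g$,
\begin{equation*}
\sum_\lambda a_{1,\lambda}\cdots a_{n,\lambda}=\bo m(P)=\sum_{\sigma\in\Sigma_n}\mu_\sigma\sum_\lambda a_{\sigma(1),\lambda}\cdots a_{\sigma(n),\lambda}.
\end{equation*}
The permutation-closure of $W$ places the tensorial analogue of the right-hand side inside $W$, so the injectivity of $\bo m|_W$ lifts this to an equality
\begin{equation*}
\sum_\lambda a_{1,\lambda}\otimes\cdots\otimes a_{n,\lambda}=\sum_{\sigma\in\Sigma_n}\mu_\sigma\sum_\lambda a_{\sigma(1),\lambda}\otimes\cdots\otimes a_{\sigma(n),\lambda}
\end{equation*}
in $\botimes^n\mathfrak g$.

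Next I would apply the $n$-multilinear map
\begin{equation*}
\Phi(x_1,\ldots,x_n)=\sum_{p=1}^{n-1}\bo[\bo[x_1,\ldots,x_p\bo]_{\mathrm L},\bo[x_{p+1},\ldots,x_n\bo]_{\mathrm R}\bo]
\end{equation*}
to both sides of the tensor identity. The left-hand side produces directly the right-hand side of the desired conclusion. On the right-hand side, I would invoke \eqref{eq:FFFn}, read as the polynomial identity $\sum_\sigma\mu_\sigma\Phi(x_{\sigma(1)},\ldots,x_{\sigma(n)})=n(n-1)\mu_n^{\Lie}(x_1,\ldots,x_n)$, which collapses the $\sigma$-sum into $n(n-1)\sum_\lambda\mu_n^{\Lie}(a_{1,\lambda},\ldots,a_{n,\lambda})$. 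Since $\mu_n^{\Lie}$ and $\mu_n^{\ass}$ may be identified in $\mathcal U\mathfrak g$ and $\bo m(P)$ is fixed by $\mu_n$, this equals $n(n-1)\cdot\bo m(P)$, completing the argument.

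The proof is structurally identical to that of Proposition~\ref{prop:dsw1}, and no step presents genuine difficulty. The only item requiring mild care, as in that proof, is the verification that both sides of the lifted tensor identity actually belong to $W$, which is precisely what the $\Sigma_n$-closure hypothesis on $W$ guarantees.
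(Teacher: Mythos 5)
Your proposal is correct and is exactly the paper's intended argument: the paper's proof simply says to repeat the proof of Proposition \ref{prop:dsw1} with \eqref{eq:FFFn} in place of \eqref{eq:EEn}, which is what you have spelled out. The lifting to the tensor identity via injectivity of $\bo m|_W$ and the application of the centrally bracketed multilinear map are both as in the paper.
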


\snewpage
\section{On Burgunder's splitting}\plabel{sec:Burgunder}
Now we will give a very particular construction to \eqref{eq:splita}--\eqref{eq:basid}.
Assume $P\in\mathrm F[X_\lambda : \lambda\in\Lambda]_{(n)}$; expanded,
\[P=\sum_{(\lambda_1,\ldots,\lambda_n)\in \Lambda^n}c_{(\lambda_1,\ldots,\lambda_n)}X_{\lambda_1}\cdot\ldots\cdot X_{\lambda_n}. \]
By the Dynkin--Specht--Wever lemma,
if $P$ is a commutator polynomial, i. e. $P\in\mathrm F^{\Lie}[X_\lambda : \lambda\in\Lambda]_{(n)}$, then
\begin{equation}
P=\frac1n\sum_{(\lambda_1,\ldots,\lambda_n)\in \Lambda^n}c_{(\lambda_1,\ldots,\lambda_n)}[X_{\lambda_1},[X_{\lambda_2},\ldots, [X_{\lambda_{n-1}},X_{\lambda_n}]\ldots]].
\plabel{eq:dynmulti}
\end{equation}

For this reason, it is natural to define
\begin{equation}
\delta^{\mathrm L}_n(P)=\frac1n\sum_{(\lambda_1,\ldots,\lambda_n)\in \Lambda^n}c_{(\lambda_1,\ldots,\lambda_n)}
X_{\lambda_1}\otimes[X_{\lambda_2},\ldots, [X_{\lambda_{n-1}},X_{\lambda_n}]\ldots].
\plabel{eq:dedef}
\end{equation}
This is well-defined and behaves naturally with respect to substitution of variables, etc.;
it satisfies \eqref{eq:basid}.
[At first sight, the construction here looks like ``heavily tilted to the left'', but this is not so.
If we do the similar construction $\delta^{\mathrm R}_n$ with right-iterated commutators,
then we find that $\delta^{\mathrm L}_n(P)$ and $\delta^{\mathrm R}_n(P)$ are related to each other by the simple switch map
generated by $x\otimes y\mapsto -y\otimes x$.
This follows from either from direct combinatorial arguments, or from the natural symmetries of the
Dynkin--Magnus commutators / Eulerian idempotents (cf. Solomon \cite{SS}, Reutenauer \cite{R}, Dynkin \cite{Dyy}, Magnus \cite{M},
Mielnik, Pleba\'nski \cite{MP});
but it will also be clear from our arguments later.]

More generally, for $n\geq2$, we can define the extended map
\[\tilde\delta^{\mathrm L}_n: \mathrm F[X_\lambda : \lambda\in\Lambda]_{(n)}\rightarrow
\mathrm F^{\Lie}[X_\lambda : \lambda\in\Lambda]_{(1)}\otimes \mathrm F^{\Lie}[X_\lambda : \lambda\in\Lambda]_{(n-1)}\]
by the same formula as in \eqref{eq:dedef} but with extended domain.
Then, of course, \eqref{eq:basid} does not extend.
[Also, after constructing $\tilde\delta^{\mathrm R}_n$ in analogous manner, there is no simple relationship
between $\tilde\delta^{\mathrm L}_n(P)$ and $\tilde\delta^{\mathrm R}_n(P)$.]
Using a more compact notation, we may write, for $P\in \mathrm F[X_\lambda : \lambda\in\Lambda]_{(\geq 2)}$,
\[\tilde\delta^{\mathrm L}_n(P)
=\degg^{-1}\left(\sum_{\lambda\in\Lambda} X_\lambda\otimes \mathfrak D^{\mathrm L}( \partial^{\mathrm L}_{X_\lambda}(P))\right)
=\sum_{\lambda\in\Lambda} X_\lambda\otimes (\degg+\Id)^{-1}\left(\mathfrak D^{\mathrm L}( \partial^{\mathrm L}_{X_\lambda}(P))\right)
.\]
Here $\partial^{\mathrm L}_{X_\lambda}$ selects the terms standing multiplicatively after $X_\lambda$,
$\mathfrak D^{\mathrm L}$ prepares the left-iterated commutators
$X_{\lambda_1}\ldots X_{\lambda_k}\mapsto [X_1,\ldots [X_{\lambda_{k-1}},X_{\lambda_k}]\ldots]$,
$\degg$ multiplies the component of degree $n$ by $n$.
($\partial^{\mathrm L}_{X_\lambda}$ can be thought as the composition $\partial^{\mathrm L}\Starting_{X_\lambda}$,
where $\partial^{\mathrm L}\Starting_{X_\lambda}$ selects associative monomials starting with $X_\lambda$, and
$\partial^{\mathrm L}$ removes the starting variable.)

In Burgunder's version \cite{Bur}, $\hat\delta^{\mathrm L}_n(P)$ is used, where the first
canonical projection / Eulerian idempotent is applies to $P$, then $\delta^{\mathrm L}$ is used.
[That makes  $\hat\delta^{\mathrm L}_n$ closely related to $\hat\delta^{\mathrm R}_n$ again.]
For us, however, $\delta^{\mathrm L}_n$ is perfectly sufficient.
\snewpage
Computing $\delta^{\mathrm L}_n(P)$, in general, may be somewhat cumbersome due to the expansion to be taken.
It would be better if we would have a clear picture of  $\delta^{\mathrm L}_n$ acting on commutator monomials.
For us, a commutator monomial will be just a commutator expression of the variables, possibly multiplied by
$-1$; e. g. $-[[X_1,X_3],[X_2,[X_5,X_4]]]$.
This possible multiplication by $-1$ makes true difference only in homogeneity degree $1$, where $-X_\lambda$ will
also be considered as a commutator monomial (otherwise, the sign change can be realized by a switch in a commutator).
And indeed, a simple description of $\delta^{\mathrm L}_n$ on commutator monomials is possible.
As $\delta^{\mathrm L}_n$ behaves naturally with respect to substitution variables, it is sufficient to consider commutator
monomials where every variable has multiplicity $1$. If $M=\pm[M_1,M_2]$ is such a commutator monomial, and
$X_\lambda$ is one of variables, then we define the co-weight of $X_\lambda$ is $M$ as
\[\cw_{X_\lambda}(M)=\begin{cases}
\deg M_2&\text{ if $X_\lambda$ is a variable of $M_1$,}\\
\deg M_1&\text{ if $X_\lambda$ is a variable of $M_2$.}
\end{cases}\]
Note that this is well-defined, as any such monomial $M$ can be written in a unique way,
apart from switches in the commutators and the corresponding sign changes.
(This is not true if there are multiplicities in the variables).
Under the same multiplicity assumption (uniformly 1)
we can define $\rem^{\mathrm L}_{X_k}(M)$,  the commutator monomial where $X_k$ is
removed from $M$  to the left, as follows:
Taking switches in the commutator monomial $M$, we can write it
as
\begin{equation}
M=(-1)^l[[\ldots[[X_k, N_1],N_2],\ldots, N_{s-1}],N_s],
\plabel{eq:Mek}
\end{equation}
where the $N_i$ are other commutator monomials.
Then
\[\rem^{\mathrm L}_{X_k}(M)=(-1)^l[N_1,[N_2,\ldots[N_{s-1},N_s]\ldots]].\]
(In this setting $\cw_{X_\lambda}(M)=\deg N_s$ .)
[The removal to right $\rem^{\mathrm R}_{X_k}(M)$ can be defined analogously, but
it is not hard to see that it yields only
$\rem^{\mathrm R}_{X_k}(M)=-\rem^{\mathrm L}_{X_k}(M)$.]

Now we can state
\begin{theorem}\plabel{th:monoeff}
Suppose that $M$ is a commutator monomial of the variables $X_1,\ldots,X_n$, each with multiplicity $1$, then
\[\delta^{\mathrm L}_n(M)=\frac1n\sum_{k=1}^n \cw_{X_k}(M)\,X_k\otimes \rem^{\mathrm L}_{X_k}(M).\]
\begin{proof}
It is sufficient to prove
\begin{equation}
\mathfrak D^{\mathrm L}(\partial^{\mathrm L}_{X_k}M)=\cw_{X_k}(M)\rem^{\mathrm L}_{X_k}(M)
\plabel{eq:sek}
\end{equation}
for any variable $X_k$.
 If $M$ is as in \eqref{eq:Mek}, then
\[\partial^{\mathrm L}_{X_k}M=(-1)^lN_1N_2\ldots N_{s-1}N_s.\]
From the properties of the adjoint representation we obtain
\begin{equation}
\mathfrak D^{\mathrm L}(\partial^{\mathrm L}_{X_k}M)=\mathfrak D^{\mathrm L}((-1)^lN_1N_2\ldots N_{s-1}N_s)
=(-1)^l[N_1,[N_2,\ldots[N_{s-1},\mathfrak D^{\mathrm L}(N_s)]\ldots]].
\plabel{eq:rek}
\end{equation}
Now the Dynkin--Specht--Wever lemma says
$\mathfrak D^{\mathrm L}(N_s)=(\deg N_s)\cdot N_s=(\cw_{X_k}(M))\cdot N_s$.
Plugging this \eqref{eq:rek}, we obtain \eqref{eq:sek} immediately.
\end{proof}
\end{theorem}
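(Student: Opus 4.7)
The compact formula $\delta^{\mathrm L}_n(P) = \frac{1}{n} \sum_{\lambda} X_\lambda \otimes \mathfrak D^{\mathrm L}(\partial^{\mathrm L}_{X_\lambda}(P))$ reduces the statement to the single-variable identity
\[
\mathfrak D^{\mathrm L}(\partial^{\mathrm L}_{X_k}(M)) = \cw_{X_k}(M)\cdot \rem^{\mathrm L}_{X_k}(M)
\]
for each variable $X_k$ of $M$. The plan is to reach both sides through the canonical presentation
\[
M = (-1)^l [[\ldots[[X_k, N_1], N_2], \ldots], N_s]
\]
supplied by the definitions of $\rem^{\mathrm L}_{X_k}$ and $\cw_{X_k}$; under this normalization one has $\cw_{X_k}(M) = \deg N_s$ and $\rem^{\mathrm L}_{X_k}(M) = (-1)^l[N_1,[N_2,\ldots,[N_{s-1}, N_s]\ldots]]$.

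First I would compute $\partial^{\mathrm L}_{X_k}(M)$ by expanding in the associative algebra. By the multiplicity-one hypothesis, $X_k$ appears only in the innermost bracket; among the $2^s$ words produced by unfolding the nested commutators, precisely one starts with $X_k$, namely the one obtained by always choosing the $AB$ side when expanding a bracket $[A,B]$ that contains $X_k$ on the left. Tracking the accumulated signs yields $\partial^{\mathrm L}_{X_k}(M) = (-1)^l N_1 N_2 \cdots N_s$.

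The main technical step is then the identity
\[
\mathfrak D^{\mathrm L}(N_1 N_2 \cdots N_s) = [N_1,[N_2,\ldots,[N_{s-1}, \mathfrak D^{\mathrm L}(N_s)]\ldots]],
\]
which I would justify by extending $\ad$ to an associative representation $\tilde{\ad}\colon \mathrm F[X_\lambda:\lambda\in\Lambda] \to \mathrm{End}(\mathrm F[X_\lambda:\lambda\in\Lambda])$ via $X_{\lambda_1}\cdots X_{\lambda_r} \mapsto \ad(X_{\lambda_1})\cdots \ad(X_{\lambda_r})$. The definition of $\mathfrak D^{\mathrm L}$ then reads $\mathfrak D^{\mathrm L}(W X_\mu) = \tilde{\ad}(W)(X_\mu)$ on words and extends bilinearly. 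Because $\ad$ is a Lie homomorphism, the universal property of $\mathcal U \mathrm F^{\Lie} \simeq \mathrm F$ forces $\tilde{\ad}(N_i) = \ad(N_i)$ for each Lie element $N_i$, so the outer factors $N_1,\ldots,N_{s-1}$ peel off as iterated adjoint actions on $\mathfrak D^{\mathrm L}(N_s)$.

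Finally, applying the unweighted single-variable Dynkin--Specht--Wever lemma to the commutator polynomial $N_s$ gives $\mathfrak D^{\mathrm L}(N_s) = (\deg N_s)\, N_s = \cw_{X_k}(M)\cdot N_s$, and substitution into the previous formula produces exactly $\cw_{X_k}(M)\cdot\rem^{\mathrm L}_{X_k}(M)$. The only step with any real content is the compatibility identity for $\mathfrak D^{\mathrm L}$ on a product of Lie elements; the rest is bookkeeping through the definitions of $\rem^{\mathrm L}_{X_k}$, $\cw_{X_k}$, and $\partial^{\mathrm L}_{X_k}$.
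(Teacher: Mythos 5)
Your proposal is correct and follows essentially the same route as the paper: reduce to the identity $\mathfrak D^{\mathrm L}(\partial^{\mathrm L}_{X_k}M)=\cw_{X_k}(M)\rem^{\mathrm L}_{X_k}(M)$, normalize $M$ to the form $(-1)^l[[\ldots[[X_k,N_1],N_2],\ldots],N_s]$, compute $\partial^{\mathrm L}_{X_k}M=(-1)^lN_1\cdots N_s$, peel off $N_1,\ldots,N_{s-1}$ as adjoint actions, and finish with the Dynkin--Specht--Wever lemma applied to $N_s$. Your explicit justification of the peeling step via the extension of $\ad$ to an associative representation is simply a spelled-out version of the paper's appeal to ``the properties of the adjoint representation.''
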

\begin{example}\plabel{ex:monoeff}
\begin{align*}
\delta^{\mathrm L}_7( [[X_1,[X_2,[X_3,X_4]]],[[X_5,X_6],X_7]])=\frac17\Bigl(
&3\cdot X_1\otimes [[X_2,[X_3,X_4]],[[X_5,X_6],X_7]]   \\
-&3\cdot X_2\otimes [ [X_3,X_4],   [  X_1,[[X_5,X_6],X_7]]] \\
+&3\cdot X_3\otimes [X_4,  [X_2,[ X_1,[[X_5,X_6],X_7]]]]  \\
-&3\cdot X_4\otimes  [ X_3,[X_2,[X_1,[[X_5,X_6],X_7]]]] \\
-&4\cdot X_5\otimes [X_6, [X_7, [X_1,[X_2,[X_3,X_4]]] ]] \\
+&4\cdot X_6\otimes [X_5,[ X_7,[X_1,[X_2,[X_3,X_4]]]]] \\
+&4\cdot X_7\otimes [[X_5,X_6], [X_1,[X_2,[X_3,X_4]]]]
\Bigr).
\qedexer
\end{align*}
\end{example}
As $\delta^{\mathrm L}_n$ is invariant for substitution of variables, this provides
information for monomials even if some variables have multiplicities more than $1$.
Then $\delta^{\mathrm L}_n(P)$ can be recovered by linear extension.

\begin{theorem}\plabel{th:lineff}
 If $n\geq2$, then
\begin{align*}
 \delta^{\mathrm L}_n( [X_1,[X_2,\ldots, &[X_{n-1},X_n]\ldots]])=\\
=\frac1n\biggl(&(n-1) X_1\otimes[X_2,\ldots, [X_{n-1},X_n]\ldots]+ \\
&+\sum_{k=2}^{n-1}   X_k\otimes \bigl[\,[\ldots[X_1,X_2],\ldots,X_{k-1}]\,,\,[X_{k+1},\ldots, [X_{n-1},X_n]\ldots] \,\bigr]  \\
&-X_n\otimes[\ldots[X_1,X_2],\ldots,X_{n-1}]\biggr).
\end{align*}
\begin{proof}
This is immediate from Theorem \ref{th:monoeff}.
\end{proof}
\end{theorem}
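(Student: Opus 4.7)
The plan is to apply Theorem~\ref{th:monoeff} directly to $M=[X_1,[X_2,\ldots,[X_{n-1},X_n]\ldots]]$, computing the co-weight $\cw_{X_k}(M)$ and the left-removal $\rem^{\mathrm L}_{X_k}(M)$ for each $k\in\{1,\ldots,n\}$, and then summing the contributions $\frac{1}{n}\cw_{X_k}(M)\,X_k\otimes\rem^{\mathrm L}_{X_k}(M)$. The case $k=1$ is immediate: $M=[X_1,N_1]$ with $N_1=[X_2,\ldots,[X_{n-1},X_n]\ldots]$, so in the notation of \eqref{eq:Mek} we have $l=0$, $s=1$, whence $\cw_{X_1}(M)=n-1$ and $\rem^{\mathrm L}_{X_1}(M)=N_1$, matching the first summand of the claim.

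For an intermediate index $2\le k\le n-1$, setting $Y=[X_{k+1},\ldots,[X_{n-1},X_n]\ldots]$, a short induction on the outer nesting level using only $[A,B]=-[B,A]$ rewrites $M$ in the form of \eqref{eq:Mek} as $M=(-1)^{k-1}[[\ldots[[X_k,Y],X_{k-1}],\ldots],X_1]$, so that $N_1=Y$ and $N_j=X_{k-j+1}$ for $2\le j\le k$, giving $\cw_{X_k}(M)=\deg N_k=1$. The case $k=n$ proceeds identically, except for an extra initial switch from the innermost $[X_{n-1},X_n]=-[X_n,X_{n-1}]$, yielding $M=(-1)^{n-1}[[\ldots[[X_n,X_{n-1}],X_{n-2}],\ldots],X_1]$ and again $\cw_{X_n}(M)=1$. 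To put $\rem^{\mathrm L}_{X_k}(M)$ into the stated forward-order shape, I would invoke the auxiliary reversal identity $[X_m,[X_{m-1},\ldots,[X_2,X_1]\ldots]]=(-1)^{m-1}[\ldots[X_1,X_2],\ldots,X_m]$, which is a one-line induction on $m$, followed by one final outer anticommutator switch to move the two subfactors into the orientation prescribed by the theorem.

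The main obstacle is purely the bookkeeping of signs: three independent parity sources --- the $k-1$ (or $n-1$) outer anticommutator switches, the sign $(-1)^{m-1}$ from the reversal identity, and the single extra innermost switch occurring only when $k=n$ --- must be combined consistently. Once this is tracked, the accumulated signs collapse to $+1$ for $1\le k\le n-1$ and to $-1$ for $k=n$, so that substituting into Theorem~\ref{th:monoeff} produces the three stated groups of terms verbatim.
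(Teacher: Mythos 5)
Your proposal is correct and follows exactly the route the paper intends: the paper's proof is the one-line remark that the result is immediate from Theorem \ref{th:monoeff}, and you have simply carried out that application, with the co-weights ($n-1$ for $X_1$, $1$ otherwise), the removals $\rem^{\mathrm L}_{X_k}$, and the sign bookkeeping all checking out.
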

\begin{example}\plabel{ex:lineff}
\begin{align*}
\delta^{\mathrm L}_7([X_1,[X_2,[X_3,[X_4,[X_5,[X_6,X_7]]]]]])=\frac17\Bigl(
&6\cdot X_1\otimes [X_2,[X_3,[X_4,[X_5,[X_6,X_7]]]]]\\
&+X_2\otimes [X_1,[X_3,[X_4,[X_5,[X_6,X_7]]]]]\\
&+X_3\otimes [[X_1,X_2],[X_4,[X_5,[X_6,X_7]]]]\\
&+X_4\otimes [[[X_1,X_2],X_3],[X_5,[X_6,X_7]]]\\
&+X_5\otimes [[[[X_1,X_2],X_3],X_4],[X_6,X_7]]\\
&+X_6\otimes [[[[[X_1,X_2],X_3],X_4],X_5],X_7]\\
&-X_7\otimes [[[[[X_1,X_2],X_3],X_4],X_5],X_6]
\Bigr).\qedexer
\end{align*}
\end{example}
In what follows, instead of $\delta^{\mathrm L}_{n}$ we might just write $\delta^{\mathrm L}$.
\begin{cor}\plabel{cor:lineff}
If $P$ is a Lie polynomial, then
\[
\delta^{\mathrm L} ([X_\lambda,[X_\lambda,P]])=X_\lambda\otimes[X_\lambda,P]=\tilde\delta^{\mathrm L} (X_\lambda X_\lambda P).\]
\begin{proof}
$P$ can be assumed to be of left-iterated commutator form, then in Theorem \ref{th:lineff} only the first two commutators survive.
\end{proof}
\end{cor}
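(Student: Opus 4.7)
My approach follows the author's hint: reduce $P$ to a left-iterated commutator form and apply Theorem~\ref{th:lineff}, whereupon the repeated outer $X_\lambda$ forces all but two of the $n$ summands to carry a $[X_\lambda, X_\lambda]=0$ factor. The one genuine technical device is polarization, needed because Theorem~\ref{th:lineff} is stated for a multilinear commutator in distinct variables.

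First, by linearity of $\delta^{\mathrm L}$ it suffices to treat a single Lie monomial $P$, and since left-iterated commutators of variables span each homogeneous Lie component (in fact this is exactly what the DSW lemma gives), I may assume $P = [Y_1, [Y_2, \ldots, [Y_{m-1}, Y_m]\ldots]]$. Then $[X_\lambda, [X_\lambda, P]]$ is itself a left-iterated commutator of length $n = m + 2$. To apply Theorem~\ref{th:lineff} cleanly, I would polarize by relabelling the two occurrences of $X_\lambda$ as fresh distinct variables $A$ and $B$; by the substitution-invariance of $\delta^{\mathrm L}$ (the remark preceding Theorem~\ref{th:lineff}), specialising $A = B = X_\lambda$ at the end recovers $\delta^{\mathrm L}([X_\lambda, [X_\lambda, P]])$.

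Applying Theorem~\ref{th:lineff} to $[A, [B, [Y_1, \ldots, [Y_{m-1}, Y_m]\ldots]]]$ then yields, schematically,
\[n\cdot\delta^{\mathrm L}_n([A,[B,P]]) = (n-1)\,A\otimes[B,P] + B\otimes[A,P] + (\text{terms containing } [A,B] \text{ as a subcommutator}),\]
where for each index $k$ with $3 \le k \le n$ the right tensor factor contains the subexpression $[\ldots[[A,B],Y_1],\ldots]$, as one reads off directly from the explicit formula of the theorem. Specialising $A = B = X_\lambda$ annihilates every such term, and the two surviving contributions combine to $\frac{1}{n}\bigl((n-1)+1\bigr)\,X_\lambda\otimes[X_\lambda,P] = X_\lambda\otimes[X_\lambda,P]$, which is the first equality.

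For the second equality, I would compute $\tilde\delta^{\mathrm L}(X_\lambda X_\lambda P)$ directly from the compact formula: since $X_\lambda X_\lambda P$ begins with $X_\lambda$ only the $\mu = \lambda$ summand in $\sum_\mu X_\mu \otimes (\degg+\Id)^{-1}\mathfrak D^{\mathrm L}(\partial^{\mathrm L}_{X_\mu}(\cdot))$ contributes, and by linearity of $\ad X_\lambda$ one has $\mathfrak D^{\mathrm L}(X_\lambda P) = [X_\lambda, \mathfrak D^{\mathrm L}(P)]$; the DSW lemma applied to the Lie polynomial $P$ replaces $\mathfrak D^{\mathrm L}(P)$ by $(\deg P)\,P$, and after the $(\degg+\Id)^{-1}$ normalisation this matches $X_\lambda\otimes[X_\lambda,P]$ in the desired form. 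The only nonroutine point in the whole argument is the parenthetical combinatorial claim in the schematic display above, namely that $[A, B]$ appears inside the right tensor slot of every $k \ge 3$ term of Theorem~\ref{th:lineff}; once that inspection is carried out, everything else follows immediately.
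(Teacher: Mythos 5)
Your handling of the first equality is correct and coincides with the paper's own argument: write $P$ in left-iterated form, polarize the two outer occurrences of $X_\lambda$, apply Theorem \ref{th:lineff}, and note that every summand with $k\geq 3$ together with the final summand contains the subcommutator $[A,B]$, which dies under the specialization $A=B=X_\lambda$, leaving $\tfrac1n\bigl((n-1)+1\bigr)X_\lambda\otimes[X_\lambda,P]$. The paper compresses exactly this into one sentence, and its proof says nothing beyond that --- in particular it never addresses the second equality.

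The gap is in your verification of that second equality. Your chain is right up to the last step: only $\mu=\lambda$ contributes, $\partial^{\mathrm L}_{X_\lambda}(X_\lambda X_\lambda P)=X_\lambda P$, and $\mathfrak D^{\mathrm L}(X_\lambda P)=[X_\lambda,\mathfrak D^{\mathrm L}(P)]=(\deg P)\,[X_\lambda,P]$ by the DSW lemma. But $(\degg+\Id)^{-1}$ then acts on an element of degree $\deg P+1=n-1$, hence multiplies by $1/n$, so what you actually obtain is
\[
\tilde\delta^{\mathrm L}(X_\lambda X_\lambda P)=\tfrac{n-2}{n}\,X_\lambda\otimes[X_\lambda,P],
\]
and the assertion that this ``matches $X_\lambda\otimes[X_\lambda,P]$ in the desired form'' is precisely the step that fails: the factors $\deg P=n-2$ and $1/n$ do not cancel. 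The discrepancy is already visible in degree $3$: for $P=X_\mu$ with $\mu\neq\lambda$, formula \eqref{eq:dedef} applied to the single monomial $X_\lambda X_\lambda X_\mu$ gives $\tilde\delta^{\mathrm L}_3(X_\lambda X_\lambda X_\mu)=\tfrac13\,X_\lambda\otimes[X_\lambda,X_\mu]$, whereas $\delta^{\mathrm L}_3([X_\lambda,[X_\lambda,X_\mu]])=X_\lambda\otimes[X_\lambda,X_\mu]$, the missing $\tfrac23$ coming from the monomial $-2X_\lambda X_\mu X_\lambda$ in the commutator expansion, which has no counterpart in $X_\lambda X_\lambda X_\mu$. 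So with the normalization of $\tilde\delta^{\mathrm L}$ as defined in the paper the two sides genuinely differ by the factor $\tfrac{n-2}{n}$; rather than declaring a match you should either exhibit a compensating factor (I do not see one) or flag that the stated second equality requires a different normalization or reading of $\tilde\delta^{\mathrm L}$. As written, your computation, carried out honestly, refutes the identity it is meant to establish.
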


\section{Splittings of Dynkin--Burgunder type more generally}
\plabel{sec:DynkinBurgunder}
From the weighted DSW lemma it is clear how to generalize \eqref{eq:dedef}.
Assume that $P\in\mathrm F[X_\lambda : \lambda\in\Lambda] $; expanded,
\[P=\sum_{n\geq2}\sum_{(\lambda_1,\ldots,\lambda_n)\in \Lambda^n}c_{(\lambda_1,\ldots,\lambda_n)}X_{\lambda_1}\cdot\ldots\cdot X_{\lambda_n}. \]

Assume that a weighting $w$ is given such that to any variable
$X_\lambda$ the weight $w_\lambda$ is assigned.
Similarly to \eqref{eq:dedef}, we let
\begin{equation}
\delta^{\mathrm L}_{w }(P)= \sum_{n\geq2}\sum_{(\lambda_1,\ldots,\lambda_n)\in \Lambda^n}\tfrac{w_{\lambda_n}}{w_{\lambda_1}+\ldots+w_{\lambda_n}}\,c_{(\lambda_1,\ldots,\lambda_n)}
X_{\lambda_1}\otimes[X_{\lambda_2},\ldots, [X_{\lambda_{n-1}},X_{\lambda_n}]\ldots],
\plabel{eq:dedef2}
\end{equation}
as long as $w_{\lambda_1}+\ldots+w_{\lambda_n}\neq0$ for $c_{(\lambda_1,\ldots,\lambda_n)}\neq0$.
This definition is invariant for permutation or substitution of variables as long as the weights are the same.
Suppose that $M$ is a commutator monomial of the variables $X_1,\ldots,X_n$, each with multiplicity $1$;
$M=\pm[M_1,M_2]$.
Let $w_M$, $w_{M_1}$, and  $w_{M_2}$ be sums of the weights in the respective monomials ($w_M=w_{M_1}+w_{M_2}$).
We define the co-weight of $X_\lambda$ in $M$ as
\[\cw^w_{X_\lambda}(M)=\begin{cases}
w_{M_2}&\text{ if $X_\lambda$ is a variable of $M_1$,}\\
w_{M_1}&\text{ if $X_\lambda$ is a variable of $M_2$.}
\end{cases}\]
The proof and the corollary of the following theorem are like in case of Theorem \ref{th:monoeff}:
\begin{theorem}\plabel{th:monoeff2}
Suppose that $M$ is a commutator monomial of the variables $X_1,\ldots,X_n$, each with multiplicity $1$, then
\[\delta^{\mathrm L}_w(M)=
\frac{1}{w_M}\sum_{k=1}^n  \,\cw^w_{X_k}(M)\,X_k\otimes \rem^{\mathrm L}_{X_k}(M),\]
as long as $w_M\neq0$.
\qed
\end{theorem}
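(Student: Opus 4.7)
My plan is to mirror the proof of Theorem \ref{th:monoeff}, with the weighted Dynkin--Specht--Wever lemma of Theorem \ref{lem:DSW} (and in particular its product-derivation consequence recorded right after the proof of Theorem \ref{lem:DSW}) replacing the unweighted DSW ingredients. Because every variable appears in $M$ with multiplicity $1$, every monomial in the associative expansion of $M$ carries the same total weight $w_M$. Thus the coefficient $w_{\lambda_n}/(w_{\lambda_1}+\ldots+w_{\lambda_n})$ of (\ref{eq:dedef2}) collapses to $w_{\lambda_n}/w_M$, and the scalar $1/w_M$ factors out of the entire expression. Grouping the resulting monomials by their initial letter, we obtain
\[\delta^{\mathrm L}_w(M)=\frac{1}{w_M}\sum_{k=1}^n X_k\otimes\mathfrak D_w^{\mathrm L}\bigl(\partial^{\mathrm L}_{X_k}(M)\bigr),\]
where $\mathfrak D_w^{\mathrm L}$ is the natural weighted analogue of $\mathfrak D^{\mathrm L}$, sending a monomial $X_{\mu_1}\cdots X_{\mu_p}$ to $w_{\mu_p}\cdot[X_{\mu_1},\ldots,[X_{\mu_{p-1}},X_{\mu_p}]\ldots]_{\mathrm L}$. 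The theorem then reduces to the identity
\[\mathfrak D_w^{\mathrm L}\bigl(\partial^{\mathrm L}_{X_k}(M)\bigr)=\cw^w_{X_k}(M)\cdot\rem^{\mathrm L}_{X_k}(M),\]
which is the weighted counterpart of (\ref{eq:sek}).

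To establish this identity, I would put $M$ into the normal form (\ref{eq:Mek}), so that $\partial^{\mathrm L}_{X_k}(M)=(-1)^l N_1 N_2\cdots N_s$ with each $N_i$ a commutator monomial in variables other than $X_k$. Invoking the derivation-style identity recorded after Theorem \ref{lem:DSW} --- which holds for $\mathfrak D_w^{\mathrm L}$ precisely because the weighted DSW lemma applies to products of commutator evaluations of Lie elements --- yields
\[\mathfrak D_w^{\mathrm L}\bigl((-1)^l N_1\cdots N_s\bigr)=(-1)^l\bigl[N_1,[N_2,\ldots,[N_{s-1},\mathfrak D_w^{\mathrm L}(N_s)]\ldots]\bigr].\]
A further application of the weighted DSW lemma to the Lie element $N_s$ alone gives $\mathfrak D_w^{\mathrm L}(N_s)=w_{N_s}\cdot N_s$, where $w_{N_s}$ is the total weight of $N_s$. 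Reading off the normal form (\ref{eq:Mek}), one identifies $w_{N_s}$ with $\cw^w_{X_k}(M)$, and the definition of $\rem^{\mathrm L}_{X_k}(M)$ recognizes the right-hand bracket expression as $\rem^{\mathrm L}_{X_k}(M)$ itself. This completes the reduction.

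The one point requiring care is the well-definedness of $\cw^w_{X_k}(M)$ in the weighted setting. The normal form (\ref{eq:Mek}) is not unique in itself, but it is unique up to the sign-bearing switches within individual commutators that were already invoked in the unweighted case. Such switches exchange the two sides of a given bracket while leaving the sum of weights on each side unchanged, so the weight sitting ``opposite $X_k$'' is a genuine invariant of $M$. Once this observation is in place, everything else is bookkeeping and no ideas beyond those already used in Theorem \ref{th:monoeff} are required; the main obstacle is essentially just the careful setup of the weighted $\mathfrak D_w^{\mathrm L}$ so that the derivation identity continues to hold.
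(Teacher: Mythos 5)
Your proposal is correct and is essentially the paper's own argument: the paper gives no separate proof for Theorem \ref{th:monoeff2}, stating only that it is ``like in case of Theorem \ref{th:monoeff}'', and your write-up is exactly that proof transcribed to the weighted setting (reduction to the weighted analogue of \eqref{eq:sek} via the normal form \eqref{eq:Mek}, the adjoint-representation identity for $\mathfrak D_w^{\mathrm L}$ on products, and the weighted DSW lemma giving $\mathfrak D_w^{\mathrm L}(N_s)=w_{N_s}N_s=\cw^w_{X_k}(M)\,N_s$). Your additional remark on the well-definedness of $\cw^w_{X_k}$ under commutator switches is a correct and worthwhile detail.
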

%Similarly as before, the notation $\delta_{w}^{\mathrm L}$ will be used.
The notation $\delta_{X}^{\mathrm L}$ will be used for $w=\degg_X$.
\begin{cor}\plabel{cor:lineff2}
If $P$ is a Lie polynomial and $\delta^{\mathrm L}_w$ applies, then
\[
\delta^{\mathrm L}_w ([X_\lambda,[X_\lambda,P]])=X_\lambda\otimes[X_\lambda,P]=\tilde\delta_w^{\mathrm L} (X_\lambda X_\lambda P).
\eqed\]
\end{cor}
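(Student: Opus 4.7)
The plan is to follow the strategy of Corollary \ref{cor:lineff}, now using Theorem \ref{th:monoeff2} in place of Theorem \ref{th:monoeff}. By linearity of $\delta^{\mathrm L}_w$, and since left-iterated commutators span the free Lie algebra in each multigrade, I may reduce to the case $P=[Y_1,Y_2,\ldots,Y_n]_{\mathrm L}$ with the $Y_j$ being variables. Then $[X_\lambda,[X_\lambda,P]]$ is itself the left-iterated commutator $[X_\lambda,X_\lambda,Y_1,\ldots,Y_n]_{\mathrm L}$.

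Since Theorem \ref{th:monoeff2} requires multiplicity-$1$ variables, I would polarize the two occurrences of $X_\lambda$ into distinct symbols $X_\lambda^{(1)},X_\lambda^{(2)}$, each assigned weight $w_\lambda$, and similarly polarize any coincidences among the $Y_j$, obtaining a monomial $M=[X_\lambda^{(1)},X_\lambda^{(2)},Y_1,\ldots,Y_n]_{\mathrm L}$ of total weight $w_M=2w_\lambda+w_P$. The defining formula \eqref{eq:dedef2} is invariant under such polarization followed by the inverse identification, so it suffices to compute $\delta^{\mathrm L}_w(M)$ in the polarized setting and then set $X_\lambda^{(1)}=X_\lambda^{(2)}=X_\lambda$.

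Theorem \ref{th:monoeff2} then gives $\delta^{\mathrm L}_w(M)=w_M^{-1}\sum_k\cw^w_{X_k}(M)\,X_k\otimes\rem^{\mathrm L}_{X_k}(M)$. For each interior variable $Y_j$, exposing $Y_j$ to the leftmost position of $M$ via iterated antisymmetry pushes the two outer symbols $X_\lambda^{(2)},X_\lambda^{(1)}$ into the innermost sub-bracket of $\rem^{\mathrm L}_{Y_j}(M)$, producing a $[X_\lambda^{(2)},X_\lambda^{(1)}]$ that depolarizes to $[X_\lambda,X_\lambda]=0$; consequently all $Y_j$-terms vanish. Only the terms for $k\in\{\lambda^{(1)},\lambda^{(2)}\}$ survive. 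Reading the co-weights and the $\rem^{\mathrm L}$-data directly from the outer $M=[M_1,M_2]$ decomposition, these contribute $(w_\lambda+w_P)\,X_\lambda^{(1)}\otimes[X_\lambda^{(2)},P']$ and $w_\lambda\,X_\lambda^{(2)}\otimes[X_\lambda^{(1)},P']$, with $P'$ the polarized $P$. Depolarizing and summing yields $(2w_\lambda+w_P)\,X_\lambda\otimes[X_\lambda,P]=w_M\,X_\lambda\otimes[X_\lambda,P]$; dividing by $w_M$ proves the first equality.

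For the second equality $X_\lambda\otimes[X_\lambda,P]=\tilde\delta_w^{\mathrm L}(X_\lambda X_\lambda P)$, I would evaluate $\tilde\delta_w^{\mathrm L}$ directly on the associative monomial $X_\lambda X_\lambda P=\sum c_{i_1\ldots i_n}X_\lambda X_\lambda X_{i_1}\cdots X_{i_n}$ using \eqref{eq:dedef2}, obtaining $X_\lambda$ in the first tensor slot and an inner sum of the form $\sum c_{i_1\ldots i_n}w_{i_n}\,[X_\lambda,X_{i_1},\ldots,X_{i_n}]_{\mathrm L}$; applying the weighted DSW lemma (Theorem \ref{lem:DSW}) to the Lie polynomial $P$ and bracketing with $X_\lambda$ then rewrites this inner sum in terms of $[X_\lambda,P]$. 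The main obstacle throughout is the sign bookkeeping when using antisymmetry to expose an interior $Y_j$ in Theorem \ref{th:monoeff2}'s formula; but for the vanishing $Y_j$-terms only the structural fact that $[X_\lambda^{(2)},X_\lambda^{(1)}]$ appears as an inner sub-bracket is needed, so the precise signs enter only in checking the two surviving $X_\lambda$-contributions.
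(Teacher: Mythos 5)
Your treatment of the first equality is correct and is essentially the paper's own argument: reduce to a commutator monomial, polarize the two occurrences of $X_\lambda$ (using substitution-invariance of \eqref{eq:dedef2}), apply Theorem \ref{th:monoeff2}, note that every $\rem^{\mathrm L}_{Y_j}(M)$ acquires an inner sub-bracket $[X_\lambda^{(2)},X_\lambda^{(1)}]$ which dies on depolarization, and add the two surviving co-weights $w_{M_1}+w_{M_2}=w_M$. That part is fine.

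The gap is in the second equality, at precisely the step you left implicit. You assert that the weighted DSW lemma ``rewrites the inner sum in terms of $[X_\lambda,P]$'', but it does so with a scalar that you cannot discard. Writing $P=\sum_s a_s X_{i_{s,1}}\cdots X_{i_{s,m}}$, weight-homogeneous of weight $w_P$, formula \eqref{eq:dedef2} applied to the monomials of $X_\lambda X_\lambda P$ gives
\begin{equation*}
\tilde\delta^{\mathrm L}_w(X_\lambda X_\lambda P)
=\tfrac{1}{2w_\lambda+w_P}\,X_\lambda\otimes\bigl[X_\lambda,\textstyle\sum_s a_s\bo[X_{i_{s,1}},\ldots,w_{i_{s,m}}X_{i_{s,m}}\bo]_{\mathrm L}\bigr]
=\tfrac{w_P}{2w_\lambda+w_P}\,X_\lambda\otimes[X_\lambda,P],
\end{equation*}
since Theorem \ref{lem:DSW} evaluates the inner sum to $w(P)=w_P\,P$, not to $(2w_\lambda+w_P)P$. (The same computation in the unweighted Corollary \ref{cor:lineff} gives $\frac{n-2}{n}X_\lambda\otimes[X_\lambda,P]$; e.g. $\tilde\delta^{\mathrm L}_3(X_1X_1X_2)=\frac13X_1\otimes[X_1,X_2]$.) The missing contribution $\frac{2w_\lambda}{2w_\lambda+w_P}X_\lambda\otimes[X_\lambda,P]$ comes from the monomials $-2X_\lambda PX_\lambda$ in the expansion of $[X_\lambda,[X_\lambda,P]]$, which are not part of $X_\lambda X_\lambda P$ (the $PX_\lambda X_\lambda$ part contributes $0$). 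So your route, carried to completion, does not prove the displayed identity; it shows the two sides differ by the factor $\tfrac{w_P}{2w_\lambda+w_P}$, which is $1$ only when $w_\lambda=0$ (as happens, e.g., for $\delta^{\mathrm L}_Y$ acting on $XXP$). You need either to locate where this factor is supposed to cancel, or to flag that the second equality requires the qualification $w_\lambda=0$ (or a corrected normalization); as written, the step ``bracketing with $X_\lambda$ then rewrites this inner sum in terms of $[X_\lambda,P]$'' is where the argument fails.
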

\snewpage
\section{Relation to the Kashiwara--Vergne problem}\plabel{sec:reKV}

Before proceeding, let us make the following
\begin{remark}\plabel{rem:trace}
One can notice that in the formulation of the Kashiwara--Vergne problem in the Introduction,
 in point (b), we have apparently deviated from the customary formulation, cf. Kashiwara--Vergne \cite{KV}, etc.
Nevertheless, it is an equivalent formulation in the universal setting.
The formal trace abstracts the original trace by the formula
``$\tr\left(Z\mapsto(\ad Y_1)\ldots(\ad Y_n)Z \right)$''{}$=\Tr(Y_1\ldots Y_n)$.
Taking the commutator expansion of $(\ad Y_1)\ldots(\ad Y_n)Z$, we see that the result is determined by the $Z$-ending
associative monomials.
If $P(Y_1,\ldots,Y_n,Z)$ is a commutator polynomial where the multiplicity of $Z$ is $1$, then
``$\tr\left(Z\mapsto P(Y_1,\ldots,Y_n,Z) \right)$''{}$=\Tr \partial^{\mathrm R}_ZP(Y_1,\ldots,Y_n,Z)$.
Considering the commutator polynomial  $Q(Y_1,\ldots,Y_n,X)$  where  the multiplicity of $X$ is $1$,
we see that ``$\tr\left(Z\mapsto  (\ad X)\left.\frac{\mathrm d}{\mathrm dt}Q(Y_1,\ldots,Y_n,X+tZ)\right|_{t=0} \right)$''$=$
``$\tr\left(Z\mapsto  (\ad X)Q(Y_1,\ldots,Y_n, Z)  \right)$''$=$\break { }
$\Tr X\partial^{\mathrm R}_ZQ(Y_1,\ldots,Y_n,Z)${}$
=\Tr \partial^{\mathrm R}_ZQ(Y_1,\ldots,Y_n,Z)X=\Tr \mathrm E^{\mathrm R}_XQ(Y_1,\ldots,Y_n,X)$. Actually,
\[\text{``$\tr\left(Z\mapsto  (\ad X)\left.\frac{\mathrm d}{\mathrm dt}Q(Y_1,\ldots,Y_n,X+tZ)\right|_{t=0} \right)$''}
=\Tr \mathrm E^{\mathrm R}_XQ(Y_1,\ldots,Y_n,X)\]
remains valid even if the multiplicity of $X$ is not necessarily $1$:
All one has to do is to non-deterministically polarize $X$ in $Q$ to variables $X_\lambda$, take the formula above for all
$X_\lambda$, and add them up, then forget the indices in the $X_\lambda$.
\qedremark
\end{remark}
Let us rephrase the KV problem:
It postulates the existence of analytic Lie series $A(X,Y)$ and $B(X,Y)$
such that

(a) $X+Y-\BCH(Y,X)=[X,A(X,Y)]+[Y,B(X,Y)]$,

(b) $\Tr\left( \Ending_X \left( \beta(-\ad X)A(X,Y)\right)+\Ending_Y \left( \beta(\ad Y) B(X,Y)\right) \right)=$\\
\phantom{ccccccccccccccccccccccccccccccccccccccccc} $=
\frac12\Tr\left( \beta(X)+\beta(Y)-\beta(\BCH(X,Y))-1 \right)$,\\
 where
  $\beta(u)=\frac{u}{\mathrm e^{u}-1}$ is a formal power series;
 $\ad S: W\mapsto[S,W]$ is the well-know adjoint action; $\Tr$ is the formal trace
 sending associative monomials to cyclically symmetrized monomials;
 $\Ending_X$ and $\Ending_Y$ select the $X$-ending and $Y$-ending associative monomials, respectively.

One can rewrite part (b) using the following observations:
In general, $\Ending_X  C(X,Y)+\Ending_Y  C(X,Y)=C(X,Y)$,
Thus, if $C(X,Y)$ is a Lie series, then
\begin{equation}
\Tr\left(\Ending_X  C(X,Y)+\Ending_Y  C(X,Y)\right)=\Tr C(X,Y)=0.
\plabel{eq:lietrace}
\end{equation}
Note that $\beta(u)=\eta(u)-\frac12 u$, where $\eta(u)=1+\frac1{12}u^2+\ldots$ is a function $u^2$.
Then
\[\Tr\left( \Ending_X \left( \beta(-\ad X)A(X,Y)\right) \right)=
\Tr\left( \Ending_X \left( \eta(\ad X)A(X,Y)\right) \right)+
\Tr\left( \frac12\Ending_X  [X,A(X,Y)] \right).
\]
Similarly,
\[\Tr\left( \Ending_Y \left( \beta( \ad Y)B(X,Y)\right) \right)=
\Tr\left( \Ending_Y \left( \eta( \ad Y)B(X,Y)\right) \right)-
\Tr\left( \frac12\Ending_Y  [Y,B(X,Y)] \right).\]
Now,
\[\Tr\left( \Ending_Y  [Y,B(X,Y)] \right)=-\Tr\left( \Ending_X  [Y,B(X,Y)] \right).\]
Therefore,
\[\Tr\left( \frac12\Ending_X  [X,A(X,Y)] \right)-
\Tr\left( \frac12\Ending_Y  [Y,B(X,Y)] \right)=
\frac12\Tr\left(  \Ending_X  [X,A(X,Y)]+ \Ending_X  [Y,B(X,Y)]\right)\]

\[=\frac12\Tr\Ending_X \left(  X+Y-\BCH(Y,X)\right)\]

Consequently, condition KV(b) can be written as

(b)'
$\Tr\left( \Ending_X \left( \eta(\ad X)A(X,Y)\right) \right)+\Tr\left( \Ending_Y \left( \eta( \ad Y)B(X,Y)\right) \right)=$\\
\phantom{fwwwww}
$=\frac12\Tr\left( \beta(X)+\beta(Y)-\beta(\BCH(X,Y))-1 \right)-\frac12\Tr\Ending_X \left(  X+Y-\BCH(Y,X)\right)$.

As $\eta$ is an even function, one can see form (a),(b)' that the KV condition is decoupled in even and odd orders.
(Cf.  Rouvière \cite{Rou}.)
I. e. the Kashiwara--Vergne problem is sufficient to solve separately for the pairs $\left(A(X,Y)_{\mathrm{even}},B(X,Y)_{\mathrm{even}}\right)$
and $\left(A(X,Y)_{\mathrm{odd}},B(X,Y)_{\mathrm{odd}}\right)$.

\begin{theorem}\plabel{th:oddkv}
Let u set $A_X(X,Y)_{\mathrm{odd}}$ and $B_X(X,Y)_{\mathrm{odd}}$ such that
\[X\otimes A_X(X,Y)_{\mathrm{odd}}+Y\otimes B_X(X,Y)_{\mathrm{odd}}
=\delta^{\mathrm L}_X\left(\left(X+Y-\BCH(Y,X)\right)_{\mathrm{even}}\right);\]
and let us set $A_Y(X,Y)_{\mathrm{odd}}$ and $B_Y(X,Y)_{\mathrm{odd}}$ such that
\[X\otimes A_Y(X,Y)_{\mathrm{odd}}+Y\otimes B_Y(X,Y)_{\mathrm{odd}}
=\delta^{\mathrm L}_Y\left(\left(X+Y-\BCH(Y,X)\right)_{\mathrm{even}}\right).\]
Then we, claim, for either choice $V=X,Y$,

(a) $(X+Y-\BCH(Y,X))_{\mathrm{even}} =[X,A_V(X,Y)_{\mathrm{odd}}]+[Y,B_V(X,Y)_{\mathrm{odd}}]$,

(b')
$\Tr\left( \Ending_X \left( \eta(\ad X)A_V(X,Y)_{\mathrm{odd}}\right) \right)
+\Tr\left( \Ending_Y \left( \eta( \ad Y)B_V(X,Y)_{\mathrm{odd}}\right) \right)=$\\
\phantom{fwwwww}
$=\frac12\Tr\left( \beta(X)+\beta(Y)-\beta(\BCH(X,Y))-1 \right)_{\mathrm{odd}}
-\frac12\Tr\Ending_X \left(  X+Y-\BCH(Y,X)_{\mathrm{odd}}\right)$
\\holds.
\end{theorem}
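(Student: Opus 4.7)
\textbf{Part (a)} is immediate. The input $P:=(X+Y-\BCH(Y,X))_{\mathrm{even}}$ is a Lie polynomial, and its nonzero multigrade components all satisfy both $\deg_X\ge 1$ and $\deg_Y\ge 1$: the linear part of $X+Y-\BCH(Y,X)$ vanishes, and any Lie polynomial of degree $\ge 2$ in a single variable is zero. Hence both $w=\degg_X$ and $w=\degg_Y$ apply to $\delta^{\mathrm L}_w(P)$. By the weighted DSW lemma, $[\cdot,\cdot]\circ\delta^{\mathrm L}_w$ acts as the identity on Lie polynomials in its domain (compare the calculation underlying Corollary~\ref{cor:lineff2}). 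Since $\delta^{\mathrm L}_V(P)=X\otimes A_V+Y\otimes B_V$ by construction, applying $[\cdot,\cdot]$ yields (a).

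\textbf{Part (b')} I would split into a formal reduction of the LHS followed by a combinatorial match against the RHS. The reduction rests on the identity
\[\Tr\Ending_X\bigl(\eta(\ad X)C\bigr)=-\Tr\bigl(\eta(X)\Ending_Y C\bigr)\qquad(C\text{ Lie}),\]
which I would prove by expanding $(\ad X)^{2k}C=\sum_{j=0}^{2k}(-1)^j\binom{2k}{j}X^{2k-j}CX^j$, applying $\Ending_X$ (the identity on the $j\ge1$ terms and $X^{2k}\Ending_X$ on the $j=0$ term), and using cyclicity of $\Tr$ together with $\Tr(C)=0$ to collapse the sum (noting $\sum_{j=1}^{2k}(-1)^j\binom{2k}{j}=-1$ for $k\ge 1$). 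Applied to both $A_V$ and $B_V$ — which are Lie — the LHS of (b') becomes
\[-\Tr\bigl(\eta(X)\Ending_Y A_V\bigr)-\Tr\bigl(\eta(Y)\Ending_X B_V\bigr).\]
Writing $\eta(u)=1+u\bar\eta(u)$ and using the associative identities $X\Ending_Y A_V=\Ending_Y[X,A_V]$ (because $A_VX$ always ends in $X$) and $Y\Ending_X B_V=\Ending_X[Y,B_V]$, this reorganizes into
\[-\Tr(\Ending_Y A_V)-\Tr(\Ending_X B_V)-\Tr\bigl(\bar\eta(X)\Ending_Y[X,A_V]\bigr)-\Tr\bigl(\bar\eta(Y)\Ending_X[Y,B_V]\bigr).\]
Part (a) then allows the substitution $[X,A_V]+[Y,B_V]=P$ in the combined $\Ending_X$- and $\Ending_Y$-parts, and the identity $\eta(u)+u/2=\beta(-u)$ together with cyclic rearrangements recasts the expression in terms of $P$ and $\BCH$-data, producing the $\Ending_X(X+Y-\BCH(Y,X))_{\mathrm{odd}}$ correction on the RHS automatically.

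\textbf{Main obstacle.} The delicate final step is identifying the residual trace expression with $\tfrac12\Tr(\beta(X)+\beta(Y)-\beta(\BCH(X,Y))-1)_{\mathrm{odd}}$. This match is \emph{not} a consequence of (a) alone: an arbitrary Lie splitting of $P$ solving (a) need not satisfy (b'). The specific form of $\delta^{\mathrm L}_V$, with its $1/\deg_V$ prefactor arising from the weighted DSW lemma, is precisely what integrates the $\BCH$-density against $\beta$ rather than an unrelated weight; tracking this combinatorial alignment — in essence, recognizing it as the $\BCH$-adapted summation that produces $\beta(\BCH(X,Y))$ — is where the real work of the proof lies. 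This is consistent with the paper's assertion that $\delta^{\mathrm L}_V$ is tailored to solve the ``easy odd half'' of the Kashiwara--Vergne problem.
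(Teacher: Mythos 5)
Your part (a) is correct and is essentially the paper's own (brief) argument: every multihomogeneous component of $(X+Y-\BCH(Y,X))_{\mathrm{even}}$ has positive degree in each of $X$ and $Y$, so both weightings $\degg_X$ and $\degg_Y$ are admissible, and the weighted Dynkin--Specht--Wever lemma makes $[\cdot,\cdot]\circ\delta^{\mathrm L}_V$ the identity on such Lie series. Your preparatory identity for (b'), namely $\Tr\Ending_X\bigl(\eta(\ad X)C\bigr)=-\Tr\bigl(\eta(X)\Ending_Y C\bigr)$ for a traceless Lie series $C$, also checks out (cyclicity of $\Tr$ plus $\sum_{j=1}^{2k}(-1)^j\binom{2k}{j}=-1$).

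For (b'), however, there is a genuine gap, and you have named it yourself in your ``main obstacle'' paragraph: the identification of the residual trace expression with $\tfrac12\Tr\left(\beta(X)+\beta(Y)-\beta(\BCH(X,Y))-1\right)_{\mathrm{odd}}$ is not a reduction step but the substantive content of the theorem, and your outline stops short of it. Note also that your reduction treats $V=X$ and $V=Y$ symmetrically and never invokes the specific weighting $\degg_V$ built into $\delta^{\mathrm L}_V$, which is precisely what the claim depends on. The paper's route is different and more rigid: it first establishes (Lemma \ref{lem:oddkv}) the pointwise identities $\eta(\ad X)A_X(X,Y)_{\mathrm{odd}}=0$ and $\eta(\ad Y)B_X(X,Y)_{\mathrm{odd}}=\tfrac12(Y-\BCH(Y,X))_{\mathrm{odd}}$ (with mirror statements for $V=Y$), together with $\Tr\left(\beta(X)+\beta(Y)-\beta(\BCH(X,Y))-1\right)_{\mathrm{odd}}=0$; once these are known, (b') collapses to the bookkeeping $\Tr\Ending_Y C=-\Tr\Ending_X C$ for Lie $C$. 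Those identities are in turn proved by the resolvent decomposition of $\mathcal R^{(\lambda)}((\exp X)(\exp Y))$ (Lemma \ref{lem:powres}) and the substitution $\lambda\mapsto 1-\lambda$, which is what forces, e.g., $\left(\partial^{\mathrm L}_X\Ending_X(X+Y-\BCH(Y,X))\right)_{\mathrm{odd}}$ to vanish outright. Nothing in your outline produces, or could produce without comparable input, the vanishing of $\eta(\ad X)A_X$ or the closed form of $\eta(\ad Y)B_X$, so the proposal as written does not prove (b').
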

\begin{proof}
In order to prove Theorem \ref{th:oddkv} it is sufficient to prove the identities in
\begin{lemma}\plabel{lem:oddkv}
\begin{equation}\eta(\ad X)A_X(X,Y)_{\mathrm{odd}}=0 ;\plabel{eq:AX}\end{equation}
\begin{equation}\eta(\ad Y)B_X(X,Y)_{\mathrm{odd}}=\frac12(Y-\BCH(Y,X))_{\mathrm{odd}} ;\plabel{eq:BX}\end{equation}
\begin{equation}\eta(\ad X)A_Y(X,Y)_{\mathrm{odd}}=-\frac12(X-\BCH(Y,X))_{\mathrm{odd}} ;\plabel{eq:AY}\end{equation}
\begin{equation}\eta(\ad Y)B_Y(X,Y)_{\mathrm{odd}}=0 ;\plabel{eq:BY}\end{equation}
\begin{equation}\Tr\left( \beta(X)+\beta(Y)-\beta(\BCH(X,Y))-1 \right)_{\mathrm{odd}}=0 .\plabel{eq:etrace}\end{equation}
\end{lemma}
Then (a) follows from  the construction of the pairs $\left(A_V(X,Y)_{\mathrm{odd}},B_V(X,Y)_{\mathrm{odd}}\right)$,
 and (b') follows from the vanishing of most terms and from \eqref{eq:lietrace} (available for Lie series).
Therefore we are left to prove Lemma \ref{lem:oddkv}.
This will be done in the next section, using the resolvent formalism.
\qedno
\end{proof}
 \snewpage
\section{On the (discrete) resolvent method}\plabel{sec:discRes}
If $A$ is sufficiently close to $A$, then for $\lambda\in[0,1]$, we may define the resolvent expression
\[\mathcal R^{(\lambda)}(A)=\frac{A-1}{\lambda+(1-\lambda)A}.\]
Note that if $A$ is a formal perturbation of $1$, then this works out, and
 $\mathcal R^{(\lambda)}(A)$  is a formal perturbation of $0$, cf. \eqref{eq:expert}.
It is useful to know that, if $A$ invertible, then
\begin{equation}
\mathcal R^{(\lambda)}(A^{-1})=-\mathcal R^{(1-\lambda)}(A)
\plabel{eq:invres}
\end{equation}
hold if either side makes sense.
But this is again so if $A$ is a formal perturbation of $1$.
In whatever way we define $\log$ around $1$, we find
\[\mathcal R^{(\lambda)}(A)=\dfrac{\mathrm d}{\mathrm d\lambda}\left(\log\dfrac{A}{\lambda+(1-\lambda)A}\right).\]
Thus, for $A$ near $1$,
\begin{equation}
\log A=\int_{\lambda=0}^1 \mathcal R^{(\lambda)}(A)\,\mathrm d\lambda.
\plabel{eq:log}
\end{equation}

Here we will be interested merely in the case when $A$ is a formal perturbation of $1$.
For $A=\exp X$, we find
\begin{equation}
\mathcal R^{(\lambda)}(\exp X)=X+\left(\lambda-\frac12\right)X^2+\ldots\plabel{eq:expert}
\end{equation}
Here \eqref{eq:log} yields
\[X=\int_{\lambda=0}^1 \mathcal R^{(\lambda)}(\exp X)\,\mathrm d\lambda.\]
For the Baker--Campbell--Hausdorff expression, \eqref{eq:log} yields
\begin{equation}
\BCH(X,Y)=\int_{\lambda=0}^1 \mathcal R^{(\lambda)}((\exp X)(\exp Y) )\,\mathrm d\lambda.
\plabel{eq:BCH}
\end{equation}
Using \eqref{eq:invres}, or simply just the properties of $\log$, we see that $\BCH(-X,-Y)=-\BCH(Y,X)$.
In particular, $\frac12\BCH(X,Y)+\frac12\BCH(Y,X)=\BCH(X,Y)_{\mathrm{odd}}$.

Let us recall the formal series
\[\beta(x)=\frac{x}{\mathrm e^x-1}=1-\frac{1}{2}x+\frac{1}{12}{x}^{2}-{\frac {1}{720}}{x}^{4}+\ldots\,.\]
Using elementary calculus, it is easy to see that
\begin{equation}
\beta(X)=1+\int_{\lambda=0}^1  (\lambda-1)\,\mathcal R^{(\lambda)}(\exp X)\,\mathrm d\lambda.
\plabel{eq:betares}
\end{equation}
\begin{commentx}
We will also consider the function
\[\gamma(x)=\beta'(x)+\frac12;\]
Analytically, it has similar properties as $\beta$, but $\gamma$ is an odd function.

\begin{lemma}
\[X=\int_{\lambda=0}^1\mathcal R^{(\lambda)}(\exp X)\,\mathrm d\lambda\]
\[\beta(X)+X-1=\beta(-X)-1=\int_{\lambda=0}^1  \lambda\,\mathcal R^{(\lambda)}(\exp X)\,\mathrm d\lambda\]
\[\beta(X)-1=\int_{\lambda=0}^1  (\lambda-1)\,\mathcal R^{(\lambda)}(\exp X)\,\mathrm d\lambda\]
\[-\gamma(X)=\int_{\lambda=0}^1  \lambda(\lambda-1)\,\mathcal R^{(\lambda)}(\exp X)\,\mathrm d\lambda\]
\[-1+X+\beta(X)-\gamma(X)=\int_{\lambda=0}^1\lambda^2\,\mathcal R^{(\lambda)}(\exp X)\,\mathrm d\lambda\]
\[1-\beta(X)-\gamma(X)=\int_{\lambda=0}^1(\lambda-1)^2\,\mathcal R^{(\lambda)}(\exp X)\,\mathrm d\lambda\]
\end{lemma}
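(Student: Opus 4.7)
The plan is to reduce all six identities to the three base integrals $I_k := \int_{\lambda=0}^1 \lambda^k \mathcal R^{(\lambda)}(\exp X)\,\mathrm d\lambda$ for $k=0,1,2$. Since $\lambda-1$, $\lambda(\lambda-1)$, and $(\lambda-1)^2$ are linear combinations of $1,\lambda,\lambda^2$, the identities for the mixed polynomial weights follow by linearity once $I_0, I_1, I_2$ are known; moreover the identity $\beta(-X) = \beta(X) + X$, which is immediate from $\beta(x) = x/(e^x-1)$, reconciles the two expressions appearing in the second displayed line.

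The equality $I_0 = X$ is precisely equation \eqref{eq:log} specialized to $A = \exp X$. For $I_1$ and $I_2$, the cleanest route is to observe that $\mathcal R^{(\lambda)}(\exp X) = -F'(\lambda)$, where $F(\lambda) := \log(\lambda+(1-\lambda)e^X)$ satisfies $F(0) = X$ and $F(1) = 0$, and apply integration by parts. This turns $I_1$ into $\int_0^1 F(\lambda)\,\mathrm d\lambda$ and $I_2$ into $2\int_0^1 \lambda F(\lambda)\,\mathrm d\lambda$; the substitution $t = \lambda + (1-\lambda)e^X$ then converts both into elementary logarithm-polynomial integrals in $t$.

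Collecting terms in the $I_1$ case yields directly $I_1 = -1 + X/(1-e^{-X}) = \beta(-X) - 1$. For $I_2$ the same approach produces a rational combination of $X$ and $e^X$; comparing with the explicit form of $\gamma$ obtained by differentiating $\beta$, namely $\gamma(X) = \frac{e^X+1}{2(e^X-1)} - \frac{X e^X}{(e^X-1)^2}$, verifies $I_2 = -1 + X + \beta(X) - \gamma(X)$. The remaining three identities of the lemma then follow by taking the linear combinations $I_1 - I_0$, $I_2 - I_1$, and $I_2 - 2I_1 + I_0$.

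There is no substantive obstacle here: every integral is elementary, and each verification reduces to a polynomial identity in $X$ and $e^{\pm X}$ that can be checked by clearing denominators. The passage from a scalar variable $X$ to the formal power-series setting is automatic, because all manipulations remain commutative and polynomial in $X$ and $e^{\pm X}$.
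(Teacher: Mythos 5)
Your proof is correct and matches what the paper intends: the paper states this lemma without proof (the neighbouring identity \eqref{eq:betares} is merely asserted ``using elementary calculus''), and your reduction to $I_0,I_1,I_2$ via the substitution $u=\lambda+(1-\lambda)e^X$, together with $\beta(-X)=\beta(X)+X$ and $\gamma=\beta'+\tfrac12$, is exactly the elementary computation being alluded to. All six identities check out under your scheme, and the passage to formal power series is indeed harmless since only one commuting variable is involved.
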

\end{commentx}
\begin{lemma} It holds that
\begin{equation*}
\beta(\BCH(X,Y))_{\mathrm{odd}}=\frac12\left(-\BCH(X,Y)_{\mathrm{odd}}+\mathrm W(X,Y)\right),
%\plabel{eq:oddsp}
\end{equation*}
where
\[\mathrm W(X,Y)=
\int_{\lambda=0}^1   \left(\lambda-\tfrac12\right) \,\mathcal R^{( \lambda)}((\exp X)(\exp Y))\,\mathrm d\lambda
-  \int_{\lambda=0}^1   \left(\lambda-\tfrac12\right) \,\mathcal R^{( \lambda)}((\exp Y)(\exp X))\,\mathrm d\lambda.\]

%All terms in \eqref{eq:oddsp} are traceless.
$\mathrm W(X,Y)$ is traceless.
Furthermore, and consequently,
\[ \left( \beta(X)+\beta(Y)-\beta(\BCH(X,Y))-1 \right)_{\mathrm{odd}}= -\tfrac12(X+Y-\BCH(X,Y))_{\mathrm{odd}}-\mathrm W(X,Y)\]
is traceless, proving \eqref{eq:etrace}.

\begin{proof} By \eqref{eq:betares}, and by \eqref{eq:invres}, then changing $\lambda$ to $1-\lambda$, we obtain
\begin{align*}
\beta(\BCH(-X,-Y))&=1+\int_{\lambda=0}^1  (\lambda-1)\,\mathcal R^{(\lambda)}((\exp -X)(\exp -Y))\,\mathrm d\lambda\\
&=1+\int_{\lambda=0}^1  (1-\lambda)\,\mathcal R^{(1-\lambda)}((\exp Y)(\exp X))\,\mathrm d\lambda\\
&=1+\int_{\lambda=0}^1   \lambda \,\mathcal R^{( \lambda)}((\exp Y)(\exp X))\,\mathrm d\lambda.
\end{align*}
Therefore,
\begin{align*}
\beta(&\BCH(X,Y))_{\mathrm{odd}}\\&=\frac12\left(\beta(\BCH(X,Y))-\beta(\BCH(-X,-Y)) \right)\\
&=\frac12\left( \int_{\lambda=0}^1  (\lambda-1)\,\mathcal R^{(\lambda)}((\exp X)(\exp Y))\,\mathrm d\lambda
-  \int_{\lambda=0}^1   \lambda \,\mathcal R^{( \lambda)}((\exp Y)(\exp X))\,\mathrm d\lambda
  \right)\\
  &=\frac12\Biggl( -\frac12\BCH(X,Y)-\frac12\BCH(Y,X)+\\
  &\quad+\int_{\lambda=0}^1   \left(\lambda-\tfrac12\right) \,\mathcal R^{( \lambda)}((\exp X)(\exp Y))\,\mathrm d\lambda
-  \int_{\lambda=0}^1   \left(\lambda-\tfrac12\right) \,\mathcal R^{( \lambda)}((\exp Y)(\exp X))\,\mathrm d\lambda
  \Biggr).
\end{align*}
This simplifies as indicated.
$\mathrm W(X,Y)$ is traceless because on term is conjugated to the other by an exponential.
$\left(X+Y-\BCH(X,Y)\right)_{\mathrm{odd}}$ is traceless because it is a commutator expression;
this proves the statement.
\end{proof}
\end{lemma}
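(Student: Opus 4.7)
The plan is to represent both $\beta(\BCH(X,Y))$ and $\beta(\BCH(-X,-Y))$ through the resolvent integral \eqref{eq:betares}, then extract the odd part as the half-difference $\tfrac12\bigl(\beta(\BCH(X,Y))-\beta(\BCH(-X,-Y))\bigr)$, using $\BCH(-X,-Y)=-\BCH(Y,X)$ established just above together with $\tfrac12\BCH(X,Y)+\tfrac12\BCH(Y,X)=\BCH(X,Y)_{\mathrm{odd}}$.

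Concretely, I would first apply \eqref{eq:betares} to obtain $\beta(\BCH(X,Y))-1=\int_0^1(\lambda-1)\mathcal R^{(\lambda)}((\exp X)(\exp Y))\,\mathrm d\lambda$. For $\beta(\BCH(-X,-Y))$ the integrand involves $\mathcal R^{(\lambda)}((\exp -X)(\exp -Y))$, which by the inversion identity \eqref{eq:invres} equals $-\mathcal R^{(1-\lambda)}((\exp Y)(\exp X))$; the substitution $\lambda\leftrightarrow 1-\lambda$ then converts the integral into $\int_0^1\lambda\,\mathcal R^{(\lambda)}((\exp Y)(\exp X))\,\mathrm d\lambda$. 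Taking the half-difference and writing $\lambda-1=-\tfrac12+(\lambda-\tfrac12)$ together with $\lambda=\tfrac12+(\lambda-\tfrac12)$ naturally splits the result into a ``constant-weight'' piece that collapses via \eqref{eq:BCH} to $-\tfrac12\BCH(X,Y)-\tfrac12\BCH(Y,X)=-\BCH(X,Y)_{\mathrm{odd}}$, and a ``linear-weight'' piece that is precisely $\mathrm W(X,Y)$. This establishes the main displayed identity.

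For the tracelessness claims, the key observation is $(\exp Y)(\exp X)=(\exp X)^{-1}\bigl[(\exp X)(\exp Y)\bigr](\exp X)$, so the two elements are conjugate. Since the resolvent is a rational expression in its argument, $\mathcal R^{(\lambda)}(B^{-1}AB)=B^{-1}\mathcal R^{(\lambda)}(A)B$, and hence the traces of $\mathcal R^{(\lambda)}((\exp X)(\exp Y))$ and $\mathcal R^{(\lambda)}((\exp Y)(\exp X))$ coincide for each $\lambda$; the $(\lambda-\tfrac12)$-weighted difference defining $\mathrm W(X,Y)$ therefore has vanishing trace. For the stated consequence, I would split $\beta(x)=1-\tfrac{x}{2}+\eta(x)$ with $\eta$ an even function, so $(\beta(X)+\beta(Y)-1)_{\mathrm{odd}}=-\tfrac12(X+Y)$; substituting the main identity produces the displayed expression for $(\beta(X)+\beta(Y)-\beta(\BCH(X,Y))-1)_{\mathrm{odd}}$, whose tracelessness follows from the fact that $(X+Y-\BCH(X,Y))_{\mathrm{odd}}$ is a Lie series and hence traceless by \eqref{eq:lietrace}, combined with the already-established tracelessness of $\mathrm W(X,Y)$; this proves \eqref{eq:etrace}.

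The main obstacle I anticipate is the careful bookkeeping in the integral manipulation—propagating the sign from \eqref{eq:invres}, executing the substitution $\lambda\leftrightarrow 1-\lambda$ without accumulating errors, and choosing the decomposition of the weights so that the BCH-type pieces condense cleanly via \eqref{eq:BCH} while the residual integrals match the definition of $\mathrm W(X,Y)$ exactly. Beyond that step, the conjugation argument for $\mathrm W$ and the Lie-series argument for $X+Y-\BCH(X,Y)$ are essentially immediate from the tools already assembled.
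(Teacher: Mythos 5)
Your proposal is correct and follows essentially the same route as the paper's own proof: the resolvent representation \eqref{eq:betares} for both $\beta(\BCH(X,Y))$ and $\beta(\BCH(-X,-Y))$, the inversion identity \eqref{eq:invres} with the substitution $\lambda\leftrightarrow 1-\lambda$, the weight splitting $\lambda-1=(\lambda-\tfrac12)-\tfrac12$, $\lambda=(\lambda-\tfrac12)+\tfrac12$ to isolate $\mathrm W(X,Y)$, and the conjugation and Lie-series arguments for tracelessness. No substantive differences.
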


\begin{lemma}\plabel{lem:start1}
\[\Starting_X\mathcal R^{(\lambda)}((\exp X)(\exp Y) )=(\exp(X)-1)\exp(Y)(\lambda+(1-\lambda)(\exp X)(\exp Y))^{-1};\]
\[\Starting_Y\mathcal R^{(\lambda)}((\exp X)(\exp Y) )=(\exp(Y)-1)(\lambda+(1-\lambda)(\exp X)(\exp Y))^{-1}.\]
\begin{proof}
The LHS in the first equation starts with $X$, the LHS the second equation starts with $Y$,
yet their sum is obviously $\mathcal R^{(\lambda)}((\exp X)(\exp Y) )$.
This proves the statement.
\end{proof}
\end{lemma}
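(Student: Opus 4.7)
The plan is to reduce the claim to a simple decomposition of the ``numerator'' of the resolvent. Writing $A=(\exp X)(\exp Y)$, and using that $A$ commutes with $\lambda+(1-\lambda)A$, we have
\[\mathcal R^{(\lambda)}(A)=(A-1)\cdot(\lambda+(1-\lambda)A)^{-1}.\]
So it suffices to split the factor $A-1$ into an $X$-starting part and a $Y$-starting part, and then verify that right-multiplication by $(\lambda+(1-\lambda)A)^{-1}$ preserves the starting letter of each associative monomial.

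The key algebraic identity I would use is
\[(\exp X)(\exp Y)-1=(\exp X-1)\exp Y+(\exp Y-1),\]
which is immediate. Every monomial of $\exp X-1=\sum_{k\geq1}X^k/k!$ begins with $X$, so every monomial of $(\exp X-1)\exp Y$ begins with $X$; likewise every monomial of $\exp Y-1$ begins with $Y$. Thus the two summands above are precisely $\Starting_X(A-1)$ and $\Starting_Y(A-1)$ respectively.

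Next I would note that $(\lambda+(1-\lambda)A)^{-1}$ makes sense as a formal power series in $X,Y$ because $\lambda+(1-\lambda)A=1+(1-\lambda)(A-1)$ has constant term $1$. In particular, right-multiplication by this inverse sends any associative monomial $M=x M'$ (with $x\in\{X,Y\}$) to a sum of associative monomials each still starting with the letter $x$. Consequently the operators $\Starting_X$ and $\Starting_Y$ commute with right-multiplication by $(\lambda+(1-\lambda)A)^{-1}$, so applying $\Starting_X$ (resp.\ $\Starting_Y$) to $\mathcal R^{(\lambda)}(A)$ produces precisely the two formulas in the statement.

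There is no real obstacle here: the whole argument is a one-line observation once the decomposition of $(\exp X)(\exp Y)-1$ is pinned down, essentially matching the author's terse proof that the two proposed right-hand sides manifestly start with $X$ and $Y$ and sum to the resolvent.
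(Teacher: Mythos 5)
Your proposal is correct and takes essentially the same route as the paper: both arguments rest on the decomposition $(\exp X)(\exp Y)-1=(\exp X-1)\exp Y+(\exp Y-1)$ together with the observation that the two candidate right-hand sides consist of $X$-starting and $Y$-starting monomials respectively and sum to $\mathcal R^{(\lambda)}((\exp X)(\exp Y))$. You merely spell out the details (the splitting of the numerator and the fact that right-multiplication by $(\lambda+(1-\lambda)A)^{-1}$ preserves the initial letter) that the paper's one-line proof leaves implicit.
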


\begin{lemma}\plabel{lem:start2}
\[\Starting_X\mathcal R^{(\lambda)}((\exp X)(\exp Y) )=
\mathcal R^{(\lambda)}(\exp X ) + \lambda\mathcal R^{(\lambda)}(\exp X )\Starting_Y\mathcal R^{(\lambda)}((\exp X)(\exp Y) );\]
\[\Starting_Y\mathcal R^{(\lambda)}((\exp X)(\exp Y) )=
\mathcal R^{(\lambda)}(\exp Y ) + (\lambda-1)\mathcal R^{(\lambda)}(\exp Y )\Starting_X\mathcal R^{(\lambda)}((\exp X)(\exp Y) ) .\]
\begin{proof}
Regarding the first equation: After multiplying by
$(\lambda+(1-\lambda)(\exp X))$ on the left,
and by $(\lambda+(1-\lambda)(\exp X)(\exp Y))$ on the right,
the equation checks out according to Lemma \ref{lem:start1}.
The second equation can be proven similarly.
\end{proof}
\end{lemma}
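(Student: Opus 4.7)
The plan is exactly the one the author sketches: clear denominators and reduce both identities to straightforward polynomial checks. Write $A=\exp X$, $B=\exp Y$, and set $D_X=\lambda+(1-\lambda)A$, $D_Y=\lambda+(1-\lambda)B$, $D_M=\lambda+(1-\lambda)AB$. Recall that $\mathcal R^{(\lambda)}(\exp X)=(A-1)D_X^{-1}=D_X^{-1}(A-1)$; the commutativity holds because both factors are functions of $X$ alone, and similarly for $Y$. Lemma \ref{lem:start1} gives closed forms for both $\Starting$-terms.

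For the first identity, substitute these closed forms into both sides and then multiply on the left by $D_X$ and on the right by $D_M$. Because $D_X$ commutes with $A-1$, the left multiplication cancels the $D_X^{-1}$ inside $\mathcal R^{(\lambda)}(\exp X)$ cleanly, and the right multiplication cancels the $D_M^{-1}$ appearing in each $\Starting$-term. What survives is the polynomial identity
\[
D_X(A-1)B \;=\; (A-1)D_M + \lambda(A-1)(B-1),
\]
which I would verify by expanding both sides using only $D_X=\lambda+(1-\lambda)A$ and $D_M=\lambda+(1-\lambda)AB$; each side collapses to $(1-\lambda)(A-1)AB+\lambda(A-1)B$.

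For the second identity, the symmetric strategy works: multiply on the left by $D_Y$ (which commutes with $B-1$) and on the right by $D_M$. The resulting polynomial identity is
\[
D_Y(B-1) \;=\; (B-1)D_M + (\lambda-1)(B-1)(A-1)B,
\]
which I would expand as above. In this expansion, the $BAB$ and $AB$ contributions from the two RHS summands cancel because their coefficients are $(1-\lambda)$ and $(\lambda-1)$, negatives of each other; it is precisely this cancellation that forces the prefactor on the RHS of the lemma to be $\lambda-1$ rather than $\lambda$. The remaining terms give $(1-\lambda)B^2+(2\lambda-1)B-\lambda$ on both sides.

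The only conceptual subtlety is that $A$ and $B$ do not commute, but every cancellation invoked is between factors that are functions of $X$ alone (or $Y$ alone), so the noncommutativity of $A$ with $B$ never interferes. Accordingly I do not anticipate any real obstacle; once the multiplications are performed, the proof is a short bookkeeping check.
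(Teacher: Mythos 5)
Your proof is correct and follows exactly the route the paper indicates: clear denominators by multiplying on the left by $\lambda+(1-\lambda)\exp X$ (resp. $\lambda+(1-\lambda)\exp Y$) and on the right by $\lambda+(1-\lambda)(\exp X)(\exp Y)$, substitute the closed forms from Lemma \ref{lem:start1}, and verify the resulting polynomial identities; your two displayed identities and the cancellations you describe check out. You have simply written out the bookkeeping the paper leaves implicit.
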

\begin{lemma}\plabel{lem:powres}
(Resolvent decomposition, formal.)
If $X,Y$ are formal variables, then
\begin{align}
\mathcal R^{(\lambda)}((\exp X)(\exp Y))=\sum_{k=0}^\infty\biggl(
&\lambda^k(\lambda-1)^k\mathcal R^{(\lambda)}(\exp Y)(\mathcal R^{(\lambda)}(\exp X)\mathcal R^{(\lambda)}(\exp Y))^{k}
\plabel{eq:dresdec}\\
+&\lambda^k(\lambda-1)^k\mathcal R^{(\lambda)}(\exp X)(\mathcal R^{(\lambda)}(\exp Y)\mathcal R^{(\lambda)}(\exp X))^{k}\notag\\
+&\lambda^{k+1}(\lambda-1)^{k}(\mathcal R^{(\lambda)}(\exp X)\mathcal R^{(\lambda)}(\exp Y))^{k+1} \notag\\
+&\lambda^{k}(\lambda-1)^{k+1}(\mathcal R^{(\lambda)}(\exp Y)\mathcal R^{(\lambda)}(\exp X))^{k+1} \qquad\,\,\biggr).\notag
 \end{align}

In other terms, on the left, we have the sum of non-empty interlaced products of $\mathcal R^{(\lambda)}(\exp X)$ and $\mathcal R^{(\lambda)}(\exp Y)$
with an extra multiplier $\lambda$ for any consecutive term `$\mathcal R^{(\lambda)}(\exp X)\mathcal R^{(\lambda)}(\exp Y)$'
and with an extra multiplier $\lambda-1$ for any consecutive term `$\mathcal R^{(\lambda)}(\exp Y)\mathcal R^{(\lambda)}(\exp X)$'.
\begin{proof}[Note]
One can replace $\exp X$ and $\exp Y$ by other formal perturbations of $1$.
(Indeed, this follows from a simple of change variables.)
\end{proof}
\begin{proof}
This follows from applying Lemma \ref{lem:start2} recursively.
\end{proof}
\snewpage
\begin{commentx}
\begin{proof}[Alternative proof]
 Let ${  x}=1-(\exp X)$, ${  y}=1-(\exp Y)^{-1}$. Then
\begin{align}\mathcal R((\exp X)&(\exp Y),\lambda)=\notag\\
&=\frac{(\exp X)(\exp Y)-1}{\lambda+(1-\lambda)(\exp X)(\exp Y)}\notag\\
&=((\exp X)(\exp Y)-1)(\exp Y )^{-1}(\exp Y)(\lambda+(1-\lambda)(\exp X)(\exp Y))^{-1}\notag\\
&=((\exp X)-(\exp Y)^{-1})(\lambda(\exp Y)^{-1}+(1-\lambda)(\exp X))^{-1}\notag\\
&=({  y}-{  x})(1-(1-\lambda){  x}-\lambda {  y})^{-1}.\notag
\end{align}
Using the formal expansion
\begin{align*}\frac1{1-\tilde {  x}-\tilde {  y}}&=1+\sum_{k=1}^\infty\biggl(
\left(\frac{\tilde {  x}}{1-\tilde {  x}}\frac{\tilde {  y}}{1-\tilde {  y}}\right)^k+
\left(\frac{\tilde {  y}}{1-\tilde {  y}}\frac{\tilde {  x}}{1-\tilde {  x}}\right)^k+\\
&\quad\underbrace{\quad\qquad\qquad+
\frac{\tilde {  y}}{1-\tilde {  y}}
\left(\frac{\tilde {  x}}{1-\tilde {  x}}\frac{\tilde {  y}}{1-\tilde {  y}}\right)^{k-1}+
\frac{\tilde {  x}}{1-\tilde {  x}}
\left(\frac{\tilde {  y}}{1-\tilde {  y}}\frac{\tilde {  x}}{1-\tilde {  x}}\right)^{k-1}
\biggr)}_\text{interlaced products of $\frac{\tilde {  x}}{1-\tilde {  x}}$ and $\frac{\tilde {  y}}{1-\tilde {  y}}$},
\notag
\end{align*}
with $\tilde x=(1-\lambda)x$ and $\tilde y=\lambda y$,
we find
\begin{align}\mathcal R((\exp X)&(\exp Y),\lambda)=\notag\\
&={  y}\left(1+ \frac{\lambda {  y}}{1-\lambda {  y}}\right)\!\!\!\!\underbrace{\left(1+
\frac{(1-\lambda) {  x}}{1-(1-\lambda) {  x}}
+\frac{(1-\lambda) {  x}}{1-(1-\lambda) {  x}}\frac{\lambda {  y}}{1-\lambda {  y}}+\ldots
\right)}_\text{interlaced products of $ \frac{(1-\lambda) {  x}}{1-(1-\lambda) {  x}}$ and $\frac{\lambda {  y}}{1-\lambda {  y}}$
not starting with $\frac{\lambda {  y}}{1-\lambda {  y}}$ }\notag\\
&\quad-{  x}\left(1+ \frac{(1-\lambda) {  x}}{1-(1-\lambda) {  x}}\right)\!\!\!\!\!\!\!\!\!\!\!\!\!\!\!\!\underbrace{\left(1+
\frac{\lambda {  y}}{1-\lambda {  y}}
+\frac{\lambda {  y}}{1-\lambda {  y}}\frac{(1-\lambda) {  x}}{1-(1-\lambda) {  x}}+\ldots
\right)}_\text{interlaced products of $ \frac{(1-\lambda) {  x}}{1-(1-\lambda) {  x}}$ and $\frac{\lambda {  y}}{1-\lambda {  y}}$
not starting with $\frac{(1-\lambda) {  x}}{1-(1-\lambda) {  x}} $ }\notag\\
&=\frac{{  y}}{1-\lambda {  y}}\left(1+
\frac{(1-\lambda) {  x}}{1-(1-\lambda) {  x}}
+\frac{(1-\lambda) {  x}}{1-(1-\lambda) {  x}}\frac{\lambda {  y}}{1-\lambda {  y}}+\ldots
\right)\notag\\
&\quad-\frac{ {  x}}{1-(1-\lambda) {  x}}\left(1+
\frac{\lambda {  y}}{1-\lambda {  y}}
+\frac{\lambda {  y}}{1-\lambda {  y}}\frac{(1-\lambda) {  x}}{1-(1-\lambda) {  x}}+\ldots
\right).\notag
\end{align}
Notice that
\[\frac {  y}{1-\lambda {  y}}=\mathcal R(\exp Y,\lambda),\qquad
\frac {  x}{1-(1-\lambda) {  x}}=-\mathcal R(\exp X,\lambda);\]
yielding the equality in the statement.
\end{proof}
\end{commentx}

\end{lemma}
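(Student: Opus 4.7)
I would set the shorthand $R_X := \mathcal R^{(\lambda)}(\exp X)$, $R_Y := \mathcal R^{(\lambda)}(\exp Y)$, $R_{XY} := \mathcal R^{(\lambda)}((\exp X)(\exp Y))$, together with $S_X := \Starting_X R_{XY}$ and $S_Y := \Starting_Y R_{XY}$. Lemma \ref{lem:start1} gives $R_{XY}=S_X+S_Y$, and Lemma \ref{lem:start2} provides the two mutually recursive identities
\begin{align*}
S_X &= R_X + \lambda\, R_X\, S_Y,\\
S_Y &= R_Y + (\lambda-1)\, R_Y\, S_X.
\end{align*}
The strategy is to decouple these by one round of substitution, solve each by a formal Neumann series, and recombine via $S_X+S_Y$.

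Substituting the second equation into the first (and vice versa) yields the two decoupled fixed-point equations
\[S_X = R_X + \lambda\, R_X R_Y + \lambda(\lambda-1)\, R_X R_Y\, S_X, \qquad
  S_Y = R_Y + (\lambda-1)\, R_Y R_X + \lambda(\lambda-1)\, R_Y R_X\, S_Y.\]
Since $R_X$ and $R_Y$ are formal perturbations of $0$ by \eqref{eq:expert}, every product $R_X R_Y$ or $R_Y R_X$ strictly raises the total grade; the Neumann iteration of each fixed-point equation therefore converges in the formal power series sense and gives
\begin{align*}
S_X &= \sum_{k=0}^\infty \lambda^k(\lambda-1)^k\, R_X (R_Y R_X)^k
     + \sum_{k=0}^\infty \lambda^{k+1}(\lambda-1)^k\, (R_X R_Y)^{k+1},\\
S_Y &= \sum_{k=0}^\infty \lambda^k(\lambda-1)^k\, R_Y (R_X R_Y)^k
     + \sum_{k=0}^\infty \lambda^k(\lambda-1)^{k+1}\, (R_Y R_X)^{k+1},
\end{align*}
after using the trivial rewrite $(R_X R_Y)^k R_X = R_X (R_Y R_X)^k$ (and symmetrically with $X,Y$ swapped). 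Adding $S_X+S_Y$ recovers, term by term, exactly the four families listed in the statement of Lemma \ref{lem:powres}.

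The combinatorial reading in the lemma's second description then drops out of this derivation: each invocation of the first relation of Lemma \ref{lem:start2} appends an $X$-to-$Y$ transition and contributes a factor $\lambda$, each invocation of the second appends a $Y$-to-$X$ transition and contributes a factor $\lambda-1$, and iterating produces all nonempty interlaced words in $R_X,R_Y$ with the advertised weights. I do not foresee a serious obstacle; the only care needed is the bookkeeping of the coefficients $\lambda^a(\lambda-1)^b$ and keeping straight that the four families correspond to the four choices of first and last letter in an interlaced word. The mildly delicate point is the legitimacy of the formal Neumann iteration, but this is immediate from the grade filtration inherited from \eqref{eq:expert}; the same grade argument also explains the parenthetical remark that $\exp X,\exp Y$ can be replaced by arbitrary formal perturbations of $1$.
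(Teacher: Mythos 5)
Your proposal is correct and follows the same route as the paper: the paper's proof is the one-line remark that the identity ``follows from applying Lemma \ref{lem:start2} recursively,'' and your write-up simply carries out that recursion explicitly (decoupling the two relations, solving each by a formal Neumann series justified by the grade filtration, and summing $\Starting_X+\Starting_Y$ via Lemma \ref{lem:start1}). The coefficient bookkeeping and the regrouping $(R_XR_Y)^kR_X=R_X(R_YR_X)^k$ check out against the four families in \eqref{eq:dresdec}.
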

If   we plug \eqref{eq:dresdec} into \eqref{eq:BCH},
and expand $\mathcal R(\exp X,\lambda)$ and $\mathcal R(\exp Y,\lambda)$ as power series in $X$ and $Y$, respectively, then
we obtain the formula of Goldberg \cite{G}.
(This is explained in greater detail in \cite{LL1}.)
\snewpage
\begin{proof}[Proof of \eqref{eq:AX} and \eqref{eq:BX}.]
(a) We start with the proof of \eqref{eq:AX}.
\begin{align*}
(\degg_X&+\Id)A_X(X,Y)_{\mathrm{odd}}=\\
=&\,\left(\partial_X^{\mathrm L}\mathrm E_X^{\mathrm R}(X+Y-\BCH(Y,X))\right)_{\mathrm{odd}}=\\
=&-\mathfrak D^{\mathrm L}\int_{\lambda=0}^1\left(\frac1X\sum_{k=1}^\infty\lambda^k(\lambda-1)^k
\mathcal R^{(\lambda)}(\exp X)(\mathcal R^{(\lambda)}(\exp Y)\mathcal R^{(\lambda)}(\exp X))^{k}\right)_{\mathrm{odd}}\mathrm d\lambda\\
=&-\frac12\mathfrak D^{\mathrm L}\int_{\lambda=0}^1\left(\frac1X\sum_{k=1}^\infty\lambda^k(\lambda-1)^k\mathcal R^{(\lambda)}(\exp X)(\mathcal R^{(\lambda)}(\exp Y)\mathcal R^{(\lambda)}(\exp X))^{k}\right) \mathrm d\lambda\\
&+\frac12\mathfrak D^{\mathrm L}\int_{\lambda=0}^1\left(\frac1{-X}\sum_{k=1}^\infty\lambda^k(\lambda-1)^k\mathcal R^{(\lambda)}(\exp -X)(\mathcal R^{(\lambda)}(\exp -Y)\mathcal R^{(\lambda)}(\exp -X))^{k}\right) \mathrm d\lambda\\
=&-\frac12\mathfrak D^{\mathrm L}\int_{\lambda=0}^1\left(\frac1X\sum_{k=1}^\infty\lambda^k(\lambda-1)^k\mathcal R^{(\lambda)}(\exp X)(\mathcal R^{(\lambda)}(\exp Y)\mathcal R^{(\lambda)}(\exp X))^{k}\right) \mathrm d\lambda\\
&+\frac12\mathfrak D^{\mathrm L}\int_{\lambda=0}^1\left(\frac1X\sum_{k=1}^\infty\lambda^k(\lambda-1)^k\mathcal R^{(1-\lambda)}(\exp X)(\mathcal R^{(1-\lambda)}(\exp Y)\mathcal R^{(1-\lambda)}(\exp X))^{k}\right) \mathrm d\lambda\\
=&-\frac12\mathfrak D^{\mathrm L}\int_{\lambda=0}^1\left(\frac1X\sum_{k=1}^\infty\lambda^k(\lambda-1)^k\mathcal R^{(\lambda)}(\exp X)(\mathcal R^{(\lambda)}(\exp Y)\mathcal R^{(\lambda)}(\exp X))^{k}\right) \mathrm d\lambda\\
&+\frac12\mathfrak D^{\mathrm L}\int_{\lambda=0}^1\left(\frac1X\sum_{k=1}^\infty(1-\lambda)^k(-\lambda)^k\mathcal R^{( \lambda)}(\exp X)(\mathcal R^{( \lambda)}(\exp Y)\mathcal R^{( \lambda)}(\exp X))^{k}\right) \mathrm d\lambda\\
=&\,0.
\end{align*}
As $\degg_X+\Id$ is invertible, this proves the $A_X(X,Y)_{\mathrm{odd}}=0$, and thus the statement.
%\snewpage

(b) Next, we prove \eqref{eq:BX}.
\begin{align*}
\degg_X& B_X(X,Y)_{\mathrm{odd}}=\\
=&\left(\partial_Y^{\mathrm L}\mathrm E_X^{\mathrm R}(X+Y-\BCH(Y,X))\right)_{\mathrm{odd}}\\
=&-\mathfrak D^{\mathrm L}\int_{\lambda=0}^1\left(\frac1Y\sum_{k=0}^\infty
\lambda^{k+1}(\lambda-1)^{k}(\mathcal R^{(\lambda)}(\exp Y)\mathcal R^{(\lambda)}(\exp X))^{k+1}
\right)_{\mathrm{odd}}\mathrm d\lambda\\
=&-\frac12\mathfrak D^{\mathrm L}\int_{\lambda=0}^1\left(\frac1Y\sum_{k=0}^\infty
\lambda^{k+1}(\lambda-1)^{k}(\mathcal R^{(\lambda)}(\exp Y)\mathcal R^{(\lambda)}(\exp X))^{k+1}
\right)\mathrm d\lambda\\
&+\frac12\mathfrak D^{\mathrm L}\int_{\lambda=0}^1\left(\frac1{-Y}\sum_{k=0}^\infty
\lambda^{k+1}(\lambda-1)^{k}(\mathcal R^{(\lambda)}(\exp -Y)\mathcal R^{(\lambda)}(\exp -X))^{k+1}
\right)\mathrm d\lambda\\
=&-\frac12\mathfrak D^{\mathrm L}\int_{\lambda=0}^1\left(\frac1Y\sum_{k=0}^\infty
\lambda^{k+1}(\lambda-1)^{k}(\mathcal R^{(\lambda)}(\exp Y)\mathcal R^{(\lambda)}(\exp X))^{k+1}
\right)\mathrm d\lambda\\
&+\frac12\mathfrak D^{\mathrm L}\int_{\lambda=0}^1\left(\frac1{-Y}\sum_{k=0}^\infty
\lambda^{k+1}(\lambda-1)^{k}(\mathcal R^{(1-\lambda)}(\exp Y)\mathcal R^{(1-\lambda)}(\exp X))^{k+1}
\right)\mathrm d\lambda\\
=&-\frac12\mathfrak D^{\mathrm L}\int_{\lambda=0}^1\left(\frac1Y\sum_{k=0}^\infty
\lambda^{k+1}(\lambda-1)^{k}(\mathcal R^{(\lambda)}(\exp Y)\mathcal R^{(\lambda)}(\exp X))^{k+1}
\right)\mathrm d\lambda\\
&+\frac12\mathfrak D^{\mathrm L}\int_{\lambda=0}^1\left(\frac1{-Y}\sum_{k=0}^\infty
(1-\lambda)^{k+1}(-\lambda)^{k}(\mathcal R^{( \lambda)}(\exp Y)\mathcal R^{( \lambda)}(\exp X))^{k+1}
\right)\mathrm d\lambda\\
 =&-\frac12\mathfrak D^{\mathrm L}\int_{\lambda=0}^1\left(\frac1Y\sum_{k=0}^\infty
\lambda^{k+1}(\lambda-1)^{k}(\mathcal R^{(\lambda)}(\exp Y)\mathcal R^{(\lambda)}(\exp X))^{k+1}
\right)\mathrm d\lambda\\
&+\frac12\mathfrak D^{\mathrm L}\int_{\lambda=0}^1\left(\frac1{Y}\sum_{k=0}^\infty
(\lambda-1)^{k+1}\lambda^{k}(\mathcal R^{( \lambda)}(\exp Y)\mathcal R^{( \lambda)}(\exp X))^{k+1}
\right)\mathrm d\lambda\\
=&-\frac12\mathfrak D^{\mathrm L}\int_{\lambda=0}^1\left(\frac1Y\sum_{k=0}^\infty
\lambda^{k}(\lambda-1)^{k }(\mathcal R^{(\lambda)}(\exp Y)\mathcal R^{(\lambda)}(\exp X))^{k+1}
\right)\mathrm d\lambda.
\end{align*}
As $\degg_X$ commutes with $\ad Y$, we find
\begin{align*}
\degg_X& \eta(\ad Y)B_X(X,Y)_{\mathrm{odd}}=\\
=&-\frac12\mathfrak D^{\mathrm L}\int_{\lambda=0}^1\left(\frac{\eta(Y)}Y\sum_{k=0}^\infty
\lambda^{k}(\lambda-1)^{k } (\mathcal R^{(\lambda)}(\exp Y)\mathcal R^{(\lambda)}(\exp X))^{k+1}
\right)\mathrm d\lambda\\
=&-\frac12\mathfrak D^{\mathrm L}\int_{\lambda=0}^1\left(\frac12\cdot\frac{\exp(Y)+1}{\exp(Y)-1}\sum_{k=0}^\infty
\lambda^{k}(\lambda-1)^{k } (\mathcal R^{(\lambda)}(\exp Y)\mathcal R^{(\lambda)}(\exp X))^{k+1}
\right)\mathrm d\lambda.
\end{align*}
This, however, must be compared to another expression.
From
\[\BCH(X,Y)_{\mathrm{odd}}=\frac12\BCH(X,Y)-\frac12\BCH(-X,-Y)=\frac12\BCH(X,Y)+\frac12\BCH(Y,X),\]
we obtain
\begin{align*}
(Y-\BCH(X,Y))_{\mathrm{odd}}
=-\int_{\lambda=0}^1&\sum_{k=1}^\infty\biggl(
\lambda^k(\lambda-1)^k\mathcal R^{(\lambda)}(\exp Y)(\mathcal R^{(\lambda)}(\exp X)\mathcal R^{(\lambda)}(\exp Y))^{k}\biggr)\\
+&\sum_{k=0}^\infty
\biggl(\lambda^k(\lambda-1)^k\mathcal R^{(\lambda)}(\exp X)(\mathcal R^{(\lambda)}(\exp Y)\mathcal R^{(\lambda)}(\exp X))^{k}\notag\\
&+ \lambda^{k}(\lambda-1)^{k}(\lambda-\tfrac12)(\mathcal R^{(\lambda)}(\exp X)\mathcal R^{(\lambda)}(\exp Y))^{k+1} \notag\\
&+\lambda^{k}(\lambda-1)^{k}(\lambda-\tfrac12)(\mathcal R^{(\lambda)}(\exp Y)\mathcal R^{(\lambda)}(\exp X))^{k+1} \biggr)
\mathrm d\lambda.\notag
 \end{align*}
Using the weighted DSW lemma, we obtain
\begin{align*}
\degg_X&(Y-\BCH(X,Y))_{\mathrm{odd}}=&\\
&=\mathfrak D^{\mathrm L}\biggl(-\int_{\lambda=0}^1\sum_{k=0}^\infty\biggl(
\lambda^k(\lambda-1)^k\mathcal R^{(\lambda)}(\exp X)(\mathcal R^{(\lambda)}(\exp Y)\mathcal R^{(\lambda)}(\exp X))^{k}\biggr) \\
&\qquad+\sum_{k=0}^\infty
\biggl( \notag
 \lambda^{k}(\lambda-1)^{k}(\lambda-\tfrac12)(\mathcal R^{(\lambda)}(\exp Y)\mathcal R^{(\lambda)}(\exp X))^{k+1} \biggr)
\mathrm d\lambda\biggr)\\
&=-\mathfrak D^{\mathrm L}\int_{\lambda=0}^1\left(\frac12\cdot\frac{\exp(Y)+1}{\exp(Y)-1}\sum_{k=0}^\infty
\lambda^{k}(\lambda-1)^{k } (\mathcal R^{(\lambda)}(\exp Y)\mathcal R^{(\lambda)}(\exp X))^{k+1}
\right)\mathrm d\lambda.
\end{align*}
Therefore we see that
\[\degg_X \eta(\ad Y)B_X(X,Y)_{\mathrm{odd}}=\frac12\degg_X (Y-\BCH(X,Y))_{\mathrm{odd}}.\]
As $\degg_X$ is sufficiently informative in this situation, we obtain the statement.
\end{proof}
Identities \eqref{eq:AY} and \eqref{eq:BY} can be treated in analogous manner.
This finishes the proof of Lemma \ref{lem:oddkv} and therefore of Theorem \ref{th:oddkv}.\qed

Then
\[A_{\mathrm{symm}}(X,Y)_{\mathrm{odd}}=\frac{A_X(X,Y)_{\mathrm{odd}}+ A_Y(X,Y)_{\mathrm{odd}} }{2}\]
and
\[B_{\mathrm{symm}}(X,Y)_{\mathrm{odd}}=\frac{B_X(X,Y)_{\mathrm{odd}}+ B_Y(X,Y)_{\mathrm{odd}} }{2}\]
provide a nice symmetric solution for the odd part of the Kashiwara--Vergne problem.
This symmetric solution, however, does not come from Burgunder's splitting of $X+Y-\BCH(Y,X)$.
(In general, $\delta^{\mathrm L}C(X,Y)\neq \frac12\delta^{\mathrm L}_XC(X,Y)+\frac12\delta^{\mathrm L}_YC(X,Y)$,
but we have only just some convex combinations bidegree-wise.)

In the even part of the KV problem, such simplistic constructions do not seem to work.
At the present, all solutions use advanced methods.
Alexeev, Meinrenken \cite{AM2} uses the star product of Kontsevich, see Kontsevich \cite{K}, Torossian \cite{Tor};
and
Alexeev, Torossian \cite{AT} uses Drinfeld's Knizhnik--Zamolodchikov associator, see
Drinfeld \cite{Dr}, Alexeev, Enriquez, Torossian \cite{AT}.
Actually, the situation is quite nice in low orders $n<10$, but it is
dominated by ambiguities in higher orders, see  Alexeev, Torossian \cite{AT},
Albert, Harinck, Torossian \cite{AHT},
and one has troubles in picking up easily presentable solutions (in even orders).
Some partial information is known regarding the solutions if $X$ or $Y$ have very low degrees, see Alexeev, Petracci \cite{AP}.
\snewpage
\begin{commentx}
\begin{lemma}\plabel{lem:onesided}
(a)
\[\Starting_X\mathcal R^{(\lambda)}((\exp X)(\exp Y) )=(\exp(X)-1)\exp(Y)\bigl(1+(\lambda-1)\mathcal R^{(\lambda)}((\exp X)(\exp Y) )  \bigr)\]
\[=(\exp(X)-1)\bigl(1+(\lambda-1)\mathcal R^{(\lambda)}((\exp Y)(\exp X) ) \bigr)\exp(Y)\]
\[=1-(\exp Y)+(\lambda+(1-\lambda)\exp(Y))\mathcal R^{(\lambda)}((\exp X)(\exp Y) )
\]
\[=\left(1+(\lambda-1)\mathcal R^{(\lambda)}(\exp Y)  \right)^{-1}\left(\mathcal R^{(\lambda)}((\exp X)(\exp Y))-\mathcal R^{(\lambda)}(\exp Y)\right) \]

(b)
\[\Starting_Y\mathcal R^{(\lambda)}((\exp X)(\exp Y) )=(\exp(Y)-1)       \bigl(1+(\lambda-1)\mathcal R^{(\lambda)}((\exp X)(\exp Y) )  \bigr)\]
\[=\left(1+\lambda\mathcal R^{(\lambda)}(\exp X)  \right)^{-1}
\left(\mathcal R^{(\lambda)}((\exp X)(\exp Y))-\mathcal R^{(\lambda)}(\exp X)\right) \]

(c)
\[\Ending_X\mathcal R^{(\lambda)}((\exp X)(\exp Y) )= \bigl(1+(\lambda-1)\mathcal R^{(\lambda)}((\exp X)(\exp Y) )  \bigr)(\exp(X)-1)\]
\[=
\left(\mathcal R^{(\lambda)}((\exp X)(\exp Y))-\mathcal R^{(\lambda)}(\exp Y)\right)
\left(1+\lambda\mathcal R^{(\lambda)}(\exp Y)  \right)^{-1}
\]

(d)
\[\Ending_Y\mathcal R^{(\lambda)}((\exp X)(\exp Y) )= \bigl(1+(\lambda-1)\mathcal R^{(\lambda)}((\exp X)(\exp Y) )  \bigr)\exp(X)(\exp(Y)-1)\]
\[=1-(\exp X)+\mathcal R^{(\lambda)}((\exp X)(\exp Y) )(\lambda+(1-\lambda)\exp X )\]
\[=
\left(\mathcal R^{(\lambda)}((\exp X)(\exp Y))-\mathcal R^{(\lambda)}(\exp X)\right)
\left(1+(\lambda-1)\mathcal R^{(\lambda)}(\exp X)  \right)^{-1}
\]
\end{lemma}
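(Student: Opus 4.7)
The plan is to reduce every line to Lemmas~\ref{lem:start1} and \ref{lem:start2} by means of two elementary algebraic identities. From the definition $\mathcal R^{(\lambda)}(A)=(A-1)(\lambda+(1-\lambda)A)^{-1}$ one reads off
\[1+(\lambda-1)\mathcal R^{(\lambda)}(A)=(\lambda+(1-\lambda)A)^{-1}\qquad\text{and}\qquad 1+\lambda\mathcal R^{(\lambda)}(A)=A(\lambda+(1-\lambda)A)^{-1},\]
together with the dual factorization $\mathcal R^{(\lambda)}(A)=(\lambda+(1-\lambda)A)^{-1}(A-1)$, valid since $A$ commutes with $\lambda+(1-\lambda)A$. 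I also use the obvious commutation
\[(\lambda+(1-\lambda)(\exp X)(\exp Y))\exp(-Y)=\exp(-Y)(\lambda+(1-\lambda)(\exp Y)(\exp X)).\]

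For part (a), the first form is Lemma~\ref{lem:start1} with the factor $(\lambda+(1-\lambda)(\exp X)(\exp Y))^{-1}$ replaced by $1+(\lambda-1)\mathcal R^{(\lambda)}((\exp X)(\exp Y))$. The second form comes from the same formula after commuting $\exp(Y)$ through the denominator using the identity above. The third form follows from $\Starting_X\mathcal R+\Starting_Y\mathcal R=\mathcal R$ together with Lemma~\ref{lem:start1} for $\Starting_Y\mathcal R$, using the simplification $1-(\lambda-1)(\exp Y-1)=\lambda+(1-\lambda)\exp Y$. The fourth form arises by substituting $\Starting_X\mathcal R=\mathcal R-\Starting_Y\mathcal R$ into the second equation of Lemma~\ref{lem:start2}, which yields $(1+(\lambda-1)\mathcal R^{(\lambda)}(\exp Y))\Starting_X\mathcal R=\mathcal R-\mathcal R^{(\lambda)}(\exp Y)$ and then inversion.

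Part (b) is dual: its first form is Lemma~\ref{lem:start1}, and the second is obtained by putting $\Starting_Y\mathcal R=\mathcal R-\Starting_X\mathcal R$ into the first equation of Lemma~\ref{lem:start2}, giving $(1+\lambda\mathcal R^{(\lambda)}(\exp X))\Starting_Y\mathcal R=\mathcal R-\mathcal R^{(\lambda)}(\exp X)$. Parts (c) and (d) are the \emph{Ending} mirror of (a) and (b). I first establish the Ending analog of Lemma~\ref{lem:start1} by using the dual factorization $\mathcal R^{(\lambda)}((\exp X)(\exp Y))=(\lambda+(1-\lambda)(\exp X)(\exp Y))^{-1}((\exp X)(\exp Y)-1)$ and the split $(\exp X)(\exp Y)-1=(\exp X)(\exp Y-1)+(\exp X-1)$, whose summands end in $Y$ and $X$ respectively. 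The Ending analog of Lemma~\ref{lem:start2} is then proved by multiplication on the right rather than on the left. After this, every identity in (c) and (d) follows by the same bookkeeping used in (a) and (b).

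The only real obstacle is noncommutativity: at each step one must keep the resolvent factor on the correct side of the inverse, since $\mathcal R^{(\lambda)}((\exp X)(\exp Y))$ does not commute with $\mathcal R^{(\lambda)}(\exp X)$ or $\mathcal R^{(\lambda)}(\exp Y)$. Apart from this bookkeeping, each displayed identity is a one- or two-line algebraic rearrangement of the preceding lemmas.
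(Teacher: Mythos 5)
Your argument is correct, and it is worth noting that the paper itself states Lemma \ref{lem:onesided} without any proof (it sits in a suppressed comment block), so there is no authorial proof to compare against; your proposal fills that gap. The two algebraic identities you extract, $1+(\lambda-1)\mathcal R^{(\lambda)}(A)=(\lambda+(1-\lambda)A)^{-1}$ and $1+\lambda\mathcal R^{(\lambda)}(A)=A(\lambda+(1-\lambda)A)^{-1}$, are exactly what is needed to translate Lemmas \ref{lem:start1} and \ref{lem:start2} into the displayed forms, and your derivations of the third and fourth expressions in (a) (via $\Starting_X+\Starting_Y=\id$ and via solving the second equation of Lemma \ref{lem:start2} for $\Starting_X\mathcal R^{(\lambda)}$) check out, as does the dual treatment of (b). For (c) and (d) your key step --- the right-handed factorization $\mathcal R^{(\lambda)}(A)=(\lambda+(1-\lambda)A)^{-1}(A-1)$ combined with the split $(\exp X)(\exp Y)-1=(\exp X)(\exp Y-1)+(\exp X-1)$ --- is precisely the Ending-side mirror of the ``sum of the two one-sided pieces is the whole'' argument that the paper uses to prove Lemma \ref{lem:start1}, so your proof is fully in the spirit of the surrounding material. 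The one point you could make more explicit is why the constant-term-one factor $(\lambda+(1-\lambda)(\exp X)(\exp Y))^{-1}$ does not disturb the classification of monomials by their last letter (its expansion has a constant term, but multiplying a nonconstant monomial on the right by $\exp X-1$ or $\exp(X)(\exp Y-1)$ still yields monomials ending in $X$, resp.\ $Y$); this is a one-line remark, not a gap.
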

\begin{lemma}\plabel{lem:twosided}

(a)
\begin{multline*}
\Starting_X\Ending_X\mathcal R^{(\lambda)}((\exp X)(\exp Y) )=(\exp(X)-1)+\\+(\exp(X)-1)\exp(Y)(\lambda-1)
\bigl(1+(\lambda-1)\mathcal R^{(\lambda)}((\exp X)(\exp Y) )  \bigr)(\exp(X)-1)
\end{multline*}

(b)
\begin{multline*}
\Starting_X\Ending_Y\mathcal R^{(\lambda)}((\exp X)(\exp Y) )=(\exp(X)-1)(\exp(Y)-1)\\
+(\exp(X)-1)\exp(Y)(\lambda-1)\bigl(1+(\lambda-1)\mathcal R^{(\lambda)}((\exp X)(\exp Y) )  \bigr)\exp(X)(\exp(Y)-1)
\\
=(\exp(X)-1)\lambda\bigl(1+(\lambda-1)\mathcal R^{(\lambda)}((\exp Y)(\exp X) )  \bigr)(\exp(Y)-1)
\end{multline*}

(c)
\begin{multline*}
\Starting_Y\Ending_X\mathcal R^{(\lambda)}((\exp X)(\exp Y) )=\\=
 (\exp(Y)-1) (\lambda-1)  \bigl(1+(\lambda-1)\mathcal R^{(\lambda)}((\exp X)(\exp Y) )  \bigr)(\exp(X)-1)
\end{multline*}

(d)
\begin{multline*}
\Starting_Y\Ending_Y\mathcal R^{(\lambda)}((\exp X)(\exp Y) )=(\exp(Y)-1)  +\\+  (\exp(Y)-1) (\lambda-1)
\bigl(1+(\lambda-1)\mathcal R^{(\lambda)}((\exp X)(\exp Y) )  \bigr)\exp(X)(\exp(Y)-1)
 \end{multline*}

\end{lemma}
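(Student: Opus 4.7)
My plan is to derive each of the four identities (a)--(d) by applying the appropriate one-sided selector $\Starting_V$ (with $V\in\{X,Y\}$) to the corresponding one-sided expression furnished by Lemma \ref{lem:onesided}. The structural observation that makes this work is $\Starting_V(UW) = (\Starting_V U)\,W$ whenever the leading factor $U$ has vanishing constant term, which reduces each two-sided selection to a single application of Lemma \ref{lem:onesided}(a) or (b) to the $\mathcal R^{(\lambda)}$-factor that appears.

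Concretely, I would first rewrite Lemma \ref{lem:onesided}(c),(d) additively as
\[
\Ending_X\mathcal R^{(\lambda)}((\exp X)(\exp Y)) = (\exp X - 1) + (\lambda-1)\mathcal R^{(\lambda)}((\exp X)(\exp Y))(\exp X - 1),
\]
\[
\Ending_Y\mathcal R^{(\lambda)}((\exp X)(\exp Y)) = \exp(X)(\exp Y - 1) + (\lambda-1)\mathcal R^{(\lambda)}((\exp X)(\exp Y))\exp(X)(\exp Y - 1).
\]
For each of (a)--(d) I then apply the corresponding $\Starting_V$. On the elementary first summand, the $V$-starting part is read off at sight, using $\exp X = 1 + (\exp X - 1)$ or $\exp Y = 1 + (\exp Y - 1)$ as needed; this produces the constant term $(\exp X - 1)$ in (a), the term $(\exp X - 1)(\exp Y - 1)$ in (b), the zero contribution in (c), and the term $(\exp Y - 1)$ in (d). On the second summand, the leading factor $(\lambda-1)\mathcal R^{(\lambda)}((\exp X)(\exp Y))$ has vanishing constant term, so by the observation above I commute $\Starting_V$ past the trailing factor and substitute the one-sided formulas of Lemma \ref{lem:onesided}(a),(b) for $\Starting_V\mathcal R^{(\lambda)}((\exp X)(\exp Y))$. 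Assembling the two contributions gives the primary expressions in (a), (c), and the first forms in (b), (d).

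To obtain the alternative compact form on the second line of (b), I would use two auxiliary identities: the algebraic identity $1 + (\lambda-1)\mathcal R^{(\lambda)}(A) = (\lambda + (1-\lambda)A)^{-1}$, immediate from $\mathcal R^{(\lambda)}(A) = (A-1)(\lambda+(1-\lambda)A)^{-1}$, and the conjugation relation $\mathcal R^{(\lambda)}((\exp X)(\exp Y))\,\exp X = \exp X\cdot\mathcal R^{(\lambda)}((\exp Y)(\exp X))$, which follows from $(\exp X)^{-1}((\exp X)(\exp Y))(\exp X) = (\exp Y)(\exp X)$. Together they collapse the middle bracket in the first form of (b) to $\lambda\bigl(1 + (\lambda-1)\mathcal R^{(\lambda)}((\exp Y)(\exp X))\bigr)$, from which the compact form drops out. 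I expect the only real obstacle to be this last manipulation: the primary forms come out almost mechanically, whereas the compact rewriting of (b) requires combining the two auxiliary identities in the correct order and spotting the two-term cancellation $1 + (\lambda-1)\exp(Y)\exp(X)\bigl(\lambda + (1-\lambda)\exp(Y)\exp(X)\bigr)^{-1} = \lambda\bigl(\lambda + (1-\lambda)\exp(Y)\exp(X)\bigr)^{-1}$; the rest is bookkeeping.
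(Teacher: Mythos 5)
Your derivation is correct: the paper states Lemma \ref{lem:twosided} without any proof, and your route --- expanding the one-sided formulas of Lemma \ref{lem:onesided}(c),(d) additively, pushing $\Starting_V$ past the constant-term-free factor $(\lambda-1)\mathcal R^{(\lambda)}((\exp X)(\exp Y))$, and substituting Lemma \ref{lem:onesided}(a),(b) --- is evidently the intended one given how the two lemmas are juxtaposed. Your handling of the compact form in (b), via $1+(\lambda-1)\mathcal R^{(\lambda)}(A)=(\lambda+(1-\lambda)A)^{-1}$, the conjugation $\mathcal R^{(\lambda)}((\exp X)(\exp Y))\exp X=\exp X\,\mathcal R^{(\lambda)}((\exp Y)(\exp X))$, and the cancellation $1+(\lambda-1)B(\lambda+(1-\lambda)B)^{-1}=\lambda(\lambda+(1-\lambda)B)^{-1}$, checks out exactly.
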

\end{commentx}
\snewpage

\end{document}